\newtheorem{theorem}{Theorem}
\newtheorem{oldtheorem}{Theorem}
\newtheorem{lemma}[theorem]{Lemma}
\newtheorem{corollary}[theorem]{Corollary}
\newtheorem{observation}[theorem]{Observation}
\newtheorem{conjecture}[theorem]{Conjecture}
\newtheorem{claim}{Claim}[subsection]
\theoremstyle{remark}
\newtheorem{remark}{Remark}
\crefname{claim}{Claim}{Claims}
\crefname{case}{Case}{Cases}
\let\eqref\labelcref
\crefname{oldtheorem}{Theorem}{Theorems}
\crefname{observation}{Observation}{Observations}
\newcommand{\join}{\vee}
\newcommand{\seqjoin}{\join}
\newcommand{\pathto}[2]{\ensuremath{#1\mathord{-}#2}}
\newcommand{\complement}{\overline}
\newcommand{\dir}{\vec}
\newlength{\revdirraise}
\newcommand{\revdir}[1]{
	\settoheight{\revdirraise}{#1}
	\ooalign{$#1$\cr{\raisebox{\revdirraise+0.85pt}
		{\rlap{$\mkern4.1mu$\rotatebox[origin=c]{180}
			{$\vec{\phantom #1}$}}}}}}
\DeclarePairedDelimiter{\abs}{\lvert}{\rvert}
\DeclarePairedDelimiter{\card}{\lvert}{\rvert}
\DeclarePairedDelimiter{\set}{\lbrace}{\rbrace}
\DeclarePairedDelimiterX{\defset}[2]{\lbrace}{\rbrace}
	{\,#1:#2\,}
\DeclarePairedDelimiter{\fcomp}{\lvert}{\rvert}
\newcommand{\vertex}{\internalvertex {}}
\newcommand{\internalvertex}{node [circle, inner sep=0pt, outer sep=0pt, minimum size=4pt, fill=black, draw=black]}
\newcommand{\fignotpanAone}[1]{%
\begin{tikzpicture}[y=0.87cm]
\foreach \y in {-0.5,0.5}{
	\draw (0,0) -- (1,\y) (#1+2,\y) -- (#1+3,0);
	\foreach \x in {0,...,#1}{
		\foreach \z in {-0.5,0.5}{
			\draw (\x+1,\y) -- (\x+2,\z);}}}
\foreach \x in {1,...,#1}{
	\draw (\x+1,-0.5) -- (\x+1,0.5);}
\foreach \x in {-1,...,#1}{
	\foreach \y in {-0.5,0.5}{
		\draw (\x+2,\y) \vertex;}}
\draw (0,0) \vertex (#1+3,0) \vertex;
\draw (0,0) node [anchor=south east] {$x$};
\draw (1,0.5) node [anchor=south east] {$y$};
\end{tikzpicture}}
\newcommand{\fignotpanAonethree}[1]{%
\begin{tikzpicture}[y=0.87cm]
\foreach \y in {-0.5,0.5}{
	\draw (0,0) -- (1,\y) (#1+4,\y) -- (#1+5,0);
	\foreach \x in {-2,...,#1}{
		\foreach \z in {-0.5,0.5}{
			\draw (\x+3,\y) -- (\x+4,\z);}}}
\foreach \x in {1,...,#1}{
	\draw (\x+2,-0.5) -- (\x+2,0.5);}
\foreach \x in {-3,...,#1}{
	\foreach \y in {-0.5,0.5}{
		\draw (\x+4,\y) \vertex;}}
\draw (0,0) \vertex (#1+5,0) \vertex;
\draw (0,0) node [anchor=south east] {$x$};
\draw (3,0.5) node [anchor=south east] {$y$};
\end{tikzpicture}}
\newcommand{\figxboxes}{%
\begin{tikzpicture}[baseline]
\foreach \x in {0,...,3}{
	\foreach \y in {0,1}{
		\foreach \z in {0,1}{
			\draw (\x,\y) -- (\x+1,\z);}}}
\foreach \x in {0,...,4}{
	\draw (\x,0) -- (\x,1);
	\foreach \y in {0,1}{
		\draw (\x,\y) \vertex;}}
\draw
	(2,1) node [anchor=south west] {$x$}
	(2,0) node [anchor=north west] {$y$};
\end{tikzpicture}}
\newcommand{\figPthreethreeKtwo}{%
\begin{tikzpicture}[x=0.87cm,baseline]
\foreach \x in {0.5,2.5,4.5}{
	\foreach \y in {0,...,5}{
		\draw (\x,1) -- (\y,0);}}
\draw (0.5,1) -- (4.5,1);
\foreach \y in {0,2,4}
	\draw (\y,0) -- +(1,0);
\foreach \x in {0.5,2.5,4.5}
	\draw (\x,1) \vertex;
\foreach \y in {0,...,5}
	\draw (\y,0) \vertex;
\draw
	(0.5,1) node [anchor=south east] {$x$}
	(4.5,1) node [anchor=south west] {$y$};
\end{tikzpicture}}
\begin{document}

\author{Jonas B. Granholm}
\title{Some cyclic properties of $L_1$-graphs}
\date{April 15, 2019}
\maketitle

\begin{abstract}\noindent
A graph $G$ is called an $L_1$-graph if
$d(u)+d(v)\ge\card{N(u)\cup N(v)\cup N(w)}-1$
for every triple of vertices $u,v,w$
where $u$ and~$v$ are at distance~2
and $w\in N(u)\cap N(v)$.
Asratian et~al.~(1996) proved that
all finite connected $L_1$-graphs on at least three vertices such that
$\card{N(u)\cap N(v)}\ge2$ for each pair of vertices
$u,v$ at distance~2
are Hamiltonian,
except for a simple family~$\mathcal{K}$ of exceptions.

We show that not all such graphs are pancyclic,
but that any non-Hamiltonian cycle in such a graph can be extended to a larger cycle
containing all vertices of the original cycle and at most two other vertices.
We also prove a similar result for paths whose endpoints do not have
any common neighbors.
\end{abstract}

\section{Introduction}

We use \cite{bondymurty} for terminology and notation not defined here
and consider simple graphs only.
If $C$ is a cycle in a graph,
then we use the notation~$\dir C$ to denote the cycle with a given direction
and $\revdir C$ for the reverse direction,
and if $x$ is a vertex on the cycle then $x^+$ and $x^-$ denote
the successor and predecessor of~$x$, respectively, in the given direction.
The same notation is used for paths.
A cycle or a path in a finite graph~$G$ is a \emph{Hamilton cycle} or \emph{Hamilton path}, respectively,
if it contains all vertices of~$G$,
and a finite graph is \emph{Hamiltonian} if it contains a Hamilton cycle.
We also use the notation $e(X,Y)$, where $X$ and~$Y$ are vertex sets,
for the number of edges joining a vertex of~$X$ with a vertex of~$Y$.

A~classic result on Hamiltonicity is the following by Dirac~\cite{dirac52}:
A finite graph~$G$ with at least three vertices is Hamiltonian if
$d(v)\ge\card{V(G)}/2$ for every vertex $v\in V(G)$.
This was generalized by Ore~\cite{ore60} as follows:
A finite graph~$G$ with at least three vertices is Hamiltonian if
$d(u)+d(v)\ge\card{V(G)}$ for every pair of non-adjacent vertices $u,v\in V(G)$.
Graphs satisfying this condition are called Ore~graphs,
and there are many results on Hamiltonicity inspired by this theorem.
Nara~\cite{nara80}, among others,
proved that the bound in Ore's theorem can be improved under certain conditions:
\begin{oldtheorem}[see e.g. Nara~\cite{nara80}]
\label{oldthm:nara}
Let $G$ be a finite $2$-connected graph on at least three vertices such that
$d(u)+d(v)\ge\card{V(G)}-1$ for every pair of non-adjacent vertices $u,v\in V(G)$.
Then $G$ is Hamiltonian
unless it belongs to the following set of exceptions:
\[
\mathcal{K}=\defset{G}{K_{p,p+1}\subseteq G\subseteq K_p\join\complement{K_{p+1}} \text{ for some }p\ge2},
\]
where $\join$ denotes the join operation.
\end{oldtheorem}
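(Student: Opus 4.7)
The plan is to follow the classical Ore/Bondy--Chvátal rotation paradigm, combined with a structural analysis of the equality case. The hypothesis $d(u)+d(v)\ge n-1$ for every non-adjacent pair is precisely Ore's condition guaranteeing a Hamilton path, so $G$ contains a Hamilton path $P=v_1v_2\cdots v_n$. If $v_1v_n\in E(G)$ we obtain a Hamilton cycle directly; otherwise Nara's bound gives $d(v_1)+d(v_n)\ge n-1$.

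Next I would apply the standard crossing trick: whenever $v_1v_{i+1}\in E(G)$ and $v_nv_i\in E(G)$ for some $i$, the path $P$ reroutes into a Hamilton cycle. Hence the set $A=\{v_i:v_{i+1}\in N(v_1)\}$ is disjoint from $N(v_n)$, both are contained in $V(G)\setminus\{v_n\}$, and $|A|+|N(v_n)|=d(v_1)+d(v_n)\ge n-1=|V(G)\setminus\{v_n\}|$. So equality holds throughout: $A$ and $N(v_n)$ partition $V(G)\setminus\{v_n\}$ and $d(v_1)+d(v_n)=n-1$. The symmetric argument on the reversed path yields a dual partition.

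To isolate the exceptional structure I would pass to the Bondy--Chvátal closure $H=\mathrm{cl}(G)$, which preserves non-Hamiltonicity. Any non-adjacent pair $u,v$ in $H$ must satisfy $d_H(u)+d_H(v)\le n-1$ (otherwise $uv$ would have been added during the closure), and combined with the Nara bound inherited from $G$, we obtain $d_H(u)+d_H(v)=n-1$. Picking a maximum independent set $I$ of $H$, each vertex of $I$ has degree at most $n-|I|$, so two such vertices summing to $n-1$ forces $|I|\le(n+1)/2$.

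The principal obstacle is ruling out strict inequality $|I|<(n+1)/2$: here I would combine the forced degree equalities with $2$-connectivity, either via a Chv\'atal--Erd\H{o}s-style argument or via an extremal longest-cycle analysis, to produce a Hamilton cycle and contradict the assumption. Once $|I|=(n+1)/2=p+1$, each vertex of $I$ has degree exactly $p$ and must be adjacent to every vertex outside $I$, giving $K_{p,p+1}\subseteq H\subseteq K_p\join\complement{K_{p+1}}$, i.e.\ $H\in\mathcal{K}$. Transferring back is then direct: $I$ remains independent in $G$, and for any $v\in I$, picking any $v'\in I\setminus\{v\}$ and applying Nara's bound to the non-adjacent pair $v,v'$ (whose degrees in $G$ are both at most $p$) forces $d_G(v)=p$, so $v$ is adjacent to all of $V(G)\setminus I$ in $G$, yielding $G\in\mathcal{K}$.
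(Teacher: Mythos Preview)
This statement is Theorem~A in the paper, a background result attributed to Nara~\cite{nara80} and stated in the introduction without proof. The paper does not supply its own argument for it, so there is no paper proof to compare your proposal against.

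As a standalone sketch, your outline follows a reasonable classical route (Hamilton path via the Ore-type bound, crossing lemma, Bondy--Chv\'atal closure, then structural analysis of the independent set), but it is not yet a proof: you explicitly flag the step ``ruling out strict inequality $|I|<(n+1)/2$'' as a ``principal obstacle'' and only gesture at a Chv\'atal--Erd\H{o}s or longest-cycle argument without carrying it out. That step is where the actual content of Nara's theorem lives, and it requires a concrete argument (for instance, a careful analysis of a longest cycle and the vertices outside it, using the tight degree sums and $2$-connectivity to force the $K_{p,p+1}$ structure). Until that case is handled, the proposal is a plan rather than a proof.
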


The above theorems only apply to graphs with large edge density
(\,$|E(G)|\geq \text{constant}\cdot|V(G)|^2$\,)
and diameter~2.
%
Asratian and Khachatryan pioneered a method to overcome this
by using local structures of graphs.
They generalized Ore's theorem to cover sparse graphs with large diameter:
\begin{oldtheorem}[Asratian--Khachatryan~\cite{hasratian90}]
\label{oldthm:L0-hamiltonian}
Let $G$ be a finite connected graph on at least three vertices such that for every triple $u,w,v$ with $d(u,v)=2$ and $w\in N(u)\cap N(v)$ the following property holds:
\[d(u)+d(v)\ge\card{N(u)\cup N(v)\cup N(w)}.\]
Then $G$ is Hamiltonian.
\end{oldtheorem}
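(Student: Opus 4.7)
The plan is to argue by contradiction using a longest-path analysis in the spirit of Ore's classical proof, but adapted to the local setting. Assume $G$ is a connected graph on at least three vertices satisfying the stated local degree condition, yet $G$ is not Hamiltonian.

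The first step is to show that $G$ contains a Hamilton path. Take a longest path $P=v_1v_2\ldots v_p$. If $p<\card{V(G)}$, then by connectivity some off-path vertex $z$ is adjacent to a vertex $v_i$ on $P$, and the maximality of $P$ forces $z$ to be non-adjacent to $v_1$ and~$v_p$ as well as to $v_{i-1}$ and $v_{i+1}$. The relevant pairs are then at distance~$2$ in $G$, so applying the hypothesis to a well-chosen triple (of the form $(z,v_i,v_{i-1})$, or $(v_1,v_{i+1},v_i)$ after a rotation) yields a lower bound on a sum of degrees that, combined with a P\'osa-type rotation of $P$, produces either a longer path or a closed structure contradicting maximality.

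Once a Hamilton path $P=v_1v_2\ldots v_n$ is in hand, the second step is to close it into a Hamilton cycle. If $v_1v_n\in E(G)$ we are done. Otherwise $v_1$ and~$v_n$ are non-adjacent, and, after rotating $P$ if necessary, I would arrange that they are at distance~$2$ in~$G$ with some common neighbor~$w$ (which will be a vertex of $P$). Applying the hypothesis to the triple $(v_1,w,v_n)$ gives $d(v_1)+d(v_n)\ge\card{N(v_1)\cup N(v_n)\cup N(w)}$, and the target is to argue that this right-hand side equals~$n$; in that case, Ore's standard index-counting argument produces an~$i$ with $v_1v_{i+1},v_iv_n\in E(G)$, yielding the Hamilton cycle $v_1v_{i+1}v_{i+2}\ldots v_nv_iv_{i-1}\ldots v_1$.

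The principal obstacle is this final step: \emph{a priori} the local bound $\card{N(v_1)\cup N(v_n)\cup N(w)}$ is strictly smaller than $n$, so one cannot simply invoke Ore's counting. The core technical work is to show that any vertex $u\notin N(v_1)\cup N(v_n)\cup N(w)$ can be rotated into one of the distinguished endpoint or pivot roles, and that iterating the rotations eventually forces every vertex of $G$ into the relevant neighbourhood union. Organising this rotation-and-iteration procedure, and keeping track of the constraints imposed by each application of the local condition along the way, is where the bulk of the delicate casework lies.
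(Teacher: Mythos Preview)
The paper does not prove this statement. Theorem~B is quoted in the introduction as a known result of Asratian and Khachatryan~\cite{hasratian90} and is used only as background motivation; no proof or proof sketch of it appears anywhere in the paper. Consequently there is nothing in the paper to compare your proposal against.

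As for the proposal itself, note that it is a plan rather than a proof, and you yourself flag the central gap: after obtaining a Hamilton path $v_1\ldots v_n$ with $d(v_1,v_n)=2$, the local hypothesis only gives
\[
d(v_1)+d(v_n)\ge\card{N(v_1)\cup N(v_n)\cup N(w)},
\]
and the right-hand side need not be~$n$, so Ore's index-counting does not apply directly. Your suggested fix---iterated rotations to force every vertex into $N(v_1)\cup N(v_n)\cup N(w)$---is not carried out, and it is not clear that it can be: rotations change the endpoints, hence change the relevant~$w$ and the neighbourhood union, so the target set is moving while you try to enlarge it. The same vagueness afflicts Step~1 (obtaining a Hamilton path): the phrase ``applying the hypothesis to a well-chosen triple\ldots combined with a P\'osa-type rotation'' hides exactly the difficulty that distinguishes the local condition from the global Ore condition. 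The original argument in~\cite{hasratian90} avoids this two-stage path-then-cycle scheme and instead works directly with a longest cycle, analysing the neighbours on the cycle of an off-cycle vertex; this is closer in spirit to the counting done in the paper's proof of Theorem~\ref{thm:cycleextension} (Claim~\ref{thm:unionge2cycleextension:claim:N(wi)} there) than to the endpoint-rotation strategy you outline.
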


A graph is called an $L_i$-graph if
$d(u)+d(v)\ge\card{N(u)\cup N(v)\cup N(w)}-i$
for each triple of vertices $u,v,w$ with $d(u,v)=2$ and $w\in N(u)\cap N(v)$.
Thus \cref{oldthm:L0-hamiltonian} can be reformulated as follows:
all finite connected $L_0$-graphs on at least three vertices are Hamiltonian.

The class of $L_1$-graphs includes not only all $L_0$-graphs and thus all Ore graphs,
but also all claw-free graphs
-- graphs that do not contain $K_{1,3}$ as an induced subgraph~\cite{asratian96c}.
A related result on claw-free graphs is the following by Shi~\cite{shi92}:
Any finite connected claw-free graph on at least three vertices such that
$\card{N(u)\cap N(v)}\ge2$ for each pair of vertices $u,v$ with $d(u,v)=2$ is Hamiltonian.

Every Hamiltonian graph~$G$ is $1$-tough,
that is, it contain no vertex set $S$ such that
the subgraph $G-S$ contains more than $\card{S}$ components.
All $L_0$-graphs and 2-connected claw-free graphs are 1-tough;
for $L_1$-graphs we need a set of exceptions~\cite{asratian96c}:
Any 2-connected $L_1$-graph is either 1-tough
or lies in the set~$\mathcal{K}$ defined above.

In~\cite{asratian96c}, Asratian, Broersma, van~den~Heuvel, and~Veldman
proved the following local analogue of \cref{oldthm:nara},
generalizing \cref{oldthm:L0-hamiltonian}
(note that all $L_0$-graphs satisfy the $\card{N(u)\cap N(v)}\ge2$ condition)
and the result of Shi:
\begin{oldtheorem}[Asratian et~al.~\cite{asratian96c}]
\label{oldthm:L1-hamilton}
Let $G$ be a finite connected $L_1$-graph on at least three vertices
such that $\card{N(u)\cap N(v)}\ge2$ for each pair of vertices $u,v$ with $d(u,v)=2$.
Then $G$ is Hamiltonian unless it belongs to the set~$\mathcal K$.
\end{oldtheorem}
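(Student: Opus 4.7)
The plan is a proof by contradiction: assume $G$ satisfies the hypotheses but is not Hamiltonian and $G\notin\mathcal K$. First I would establish 2-connectivity: if $x$ were a cut vertex, taking $a,b\in N(x)$ in distinct components of $G-x$ would give $d(a,b)=2$ with $N(a)\cap N(b)=\set{x}$, contradicting the common-neighbor hypothesis. The toughness result quoted just before the theorem then gives that $G$ is 1-tough.

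Let $\dir C$ be a longest cycle in~$G$ and $H$ a component of $G-V(C)$, and set $X=\defset{x\in V(C)}{N(x)\cap V(H)\neq\emptyset}$. 2-connectivity of~$G$ gives $\card{X}\ge2$. Maximality of~$C$ together with connectivity of~$H$ forces $x^+\notin X$ for every $x\in X$: otherwise an $H$-path from a neighbor of~$x$ to a neighbor of~$x^+$ could be spliced between $x$ and~$x^+$ along the cycle to produce a strictly longer cycle.

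Now fix $x\in X$ and $v\in V(H)\cap N(x)$. Since $v$ and~$x^+$ share the neighbor~$x$ and $vx^+\notin E(G)$ (else $C$ extends by inserting~$v$), we have $d(v,x^+)=2$, and the hypothesis produces $w\in N(v)\cap N(x^+)$ with $w\neq x$. Since $v\in V(H)$, $N(v)\subseteq V(H)\cup X$; and since $x^+\notin X$, we get $w\notin V(H)$, hence $w\in X$. Applying the $L_1$-condition to the triple $(v,x^+,x)$,
\[d(v)+d(x^+)\ge\card{N(v)\cup N(x^+)\cup N(x)}-1.\]
I would combine this bound with the structural restrictions just derived---$N(v)\subseteq V(H)\cup X$, $N(x^+)\cap V(H)=\emptyset$, and no two $X$-vertices consecutive on~$C$---and repeat the analysis for other $x\in X$ and other common neighbors, to extract either a pair of $X$-vertices whose successors are adjacent (which yields a longer cycle via a classical Pósa-type interchange) or equality throughout the $L_1$-bound.

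The main obstacle will be this equality case. Tightness of the $L_1$-inequality pins the neighborhoods down very precisely, and I expect that combining the tight information for all $x\in X$ with 1-toughness and the common-neighbor hypothesis forces the rigid structure $K_{p,p+1}\subseteq G\subseteq K_p\join\complement{K_{p+1}}$, contradicting $G\notin\mathcal K$. Since $\mathcal K$ is exactly the family where $L_1$-equality coexists with failure of Hamiltonicity, recognising this structure from the local tight inequalities---rather than some intermediate near-extremal configuration that must also be ruled out by the common-neighbor hypothesis---is the core difficulty.
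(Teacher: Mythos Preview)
The paper does not contain its own proof of this theorem; it is quoted from~\cite{asratian96c} as background. What the paper does prove is the stronger cycle-extension result \cref{thm:cycleextension}, from which this statement follows at once: start from any short cycle and extend repeatedly until a Hamilton cycle is reached, the only obstruction being $G\in\mathcal K$.

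Your outline is the standard longest-cycle strategy and agrees with the opening moves of the paper's proof of \cref{thm:cycleextension}, but it stops exactly where the content lies. You flag the equality case as ``the core difficulty'' and write only ``I expect that\dots''; no mechanism is given that forces equality simultaneously for all attachment vertices or that extracts the $\mathcal K$-structure from it. In the paper's argument this is done concretely: working with a single $v\notin V(C)$ and $W=N(v)\cap V(C)$, the $L_1$-inequality on each triple $(v,w_i^+,w_i)$ yields $\card{N(w_i)\cap W^+}\le\card{N(w_i^+)\cap W}$; summing over~$i$ counts $e(W^+,W)$ on both sides and forces every one of these inequalities to be an equality, which in turn pins down $N(w_i)\setminus\bigl(N(w_i^+)\cup N(v)\cup\set v\bigr)\subseteq W^+$ exactly. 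From this rigidity one deduces that $v$ is adjacent to \emph{every second} vertex of~$C$, and then a second application of the same reasoning to any other outside vertex, together with 1-toughness, gives $G\in\mathcal K$. Your plan of working with a whole component~$H$ and its attachment set~$X$ is a natural variant, but without the double-counting step (or an equivalent device) it remains a plan rather than a proof.
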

Furthermore, it was proved in~\cite{asratian96c}
that graphs satisfying these conditions have the property that
every pair of vertices at distance at least three is connected by a Hamilton path.

Some other properties of $L_1$-graphs have been found.
Saito~\cite{saito96} showed that all finite 2-connected $L_1$-graphs of diameter~$2$ are Hamiltonian
unless they belong to the set of exceptions~$\mathcal K$,
while Li and Schelp~\cite{li00} showed that every finite 2-connected $L_1$-graph~$G$ with minimum degree
$\delta(G)\ge(\card{V(G)}-2)/3$ is Hamiltonian unless $G\in\mathcal K$.
Furthermore, it was shown in~\cite{asratian96c} that every finite connected $L_1$-graph of even order
has a perfect matching.

\medskip
A finite graph~$G$ is said to be \emph{pancyclic} if it contains a cycle of each length
from~$3$ up to~$\card{V(G)}$.
Bondy~\cite{bondy71} proved that all Ore~graphs are pancyclic,
except for complete bipartite graphs $K_{n,n}$, $n\ge2$.
He also made a metaconjecture that
almost any nontrivial condition that implies Hamiltonicity also implies pancyclicity,
though there may be a simple family of exceptional graphs.
Aldred, Holton, and Min~\cite{aldred94} proved that
graphs satisfying the conditions of \cref{oldthm:nara} are pancyclic,
except for the graphs in the set~$\mathcal{K}$,
complete bipartite graphs $K_{n,n}$, and the cycle $C_5$.

An even stronger property is called \emph{cycle extendability},
which means that any cycle that does not include all vertices of the graph
can be extended to a new cycle containing
a single new vertex in addition to all vertices of the original cycle.
This notion was introduced by Hendry~\cite{hendry90}, who also proved that Ore~graphs,
with a relatively complicated set of exceptions, are cycle extendable.
Without any exceptions, however, Bondy~\cite{bondy81} had earlier proved that any cycle in an Ore~graph
that does not include all vertices
can be extended to a larger cycle containing all vertices of the original cycle and at most two other vertices.

$L_0$-graphs (with the exception of the graphs $K_{n,n}$)
have also been found to be pancyclic by Asratian and Sarkisian~\cite{asratian94cite3}.
They further proved the following:
\begin{oldtheorem}[Asratian--Sarkisian~\cite{asratian96b}]
\label{oldthm:asratian96b}
Let $G$ be a finite connected $L_0$-graph on at least three vertices.
Then for each $\ell=4,\dotsc,\card{V(G)}$,
unless $G=K_{n,n}$ for some $n\ge2$,
every vertex of~$G$ lies on a cycle of length~$\ell$,
every edge of~$G$ that does not lie on a triangle lies on a cycle of length~$\ell$,
and every pair of vertices at distance no less than three and at most $\ell$
is connected by a path with $\ell$~vertices.
\end{oldtheorem}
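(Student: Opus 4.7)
The plan is to prove all three assertions simultaneously by downward induction on~$\ell$, from $\ell=\card{V(G)}$ down to $\ell=4$. For the base case, \cref{oldthm:L0-hamiltonian} supplies a Hamilton cycle, which handles the vertex and edge assertions. For the path assertion at $\ell=\card{V(G)}$ I would additionally establish a Hamilton-connectedness property: every pair of vertices at distance at least~$3$ is joined by a Hamilton path. This should follow by adapting the method used for \cref{oldthm:L0-hamiltonian} (the analogous statement for the wider class of $L_1$-graphs is quoted right after \cref{oldthm:L1-hamilton}).

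For the inductive step from $\ell+1$ down to~$\ell$, the prescribed object is a vertex~$v$ in the first claim, a triangle-free edge~$uv$ in the second, or a pair of endpoints $a,b$ with $d(a,b)\ge3$ in the third. By induction I take a cycle of length~$\ell+1$ through this object, or a path on $\ell+1$ vertices connecting $a$ to~$b$ in the path case. Call a vertex $y$ on such a cycle~$C$ \emph{shrinkable} if its two $C$-neighbors $y^-,y^+$ are adjacent in~$G$; then replacing the subpath $y^-yy^+$ by the edge $y^-y^+$ yields a cycle of length~$\ell$ on $V(C)\setminus\{y\}$, and the analogous surgery shortens the path. If some shrinkable $y$ distinct from the prescribed vertex(es) exists, we are done.

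The main obstacle is the case when no admissible shrinkable $y$ exists. For every forbidden~$y$ the pair $(y^-,y^+)$ then satisfies $d(y^-,y^+)=2$ with $y\in N(y^-)\cap N(y^+)$, so the $L_0$ condition gives
\[d(y^-)+d(y^+)\ge\card{N(y^-)\cup N(y^+)\cup N(y)}.\]
My plan is to iterate this around~$C$: the systematic absence of the chords $y^-y^+$, combined with the degree bounds above, should force $V(C)$ into two parity classes whose only incident edges run between them. Propagating this bipartite pattern to all of~$V(G)$ using the connectivity of~$G$ and further applications of the $L_0$ condition would yield $G\subseteq K_p\join\complement{K_q}$, and a final appeal to the $L_0$ condition would force $p=q$ and hence $G=K_{p,p}$, matching the lone exceptional family. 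The triangle-freeness of~$uv$ in the edge case and the hypothesis $d(a,b)\ge3$ in the path case serve to control overlaps in the inequality above and to ensure the shrinkable vertex, when one exists, is not one of the prescribed vertices.
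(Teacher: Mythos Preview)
This statement is not proved in the present paper; it is quoted as a known result of Asratian and Sarkisian~\cite{asratian96b} and serves only as background, so there is no proof here to compare your proposal against.

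Evaluating your outline on its own merits, there are two genuine gaps. First, your base case for the edge assertion is incomplete: \cref{oldthm:L0-hamiltonian} supplies \emph{some} Hamilton cycle, not one through a prescribed triangle-free edge~$uv$. You would need to show separately that every such edge lies on a Hamilton cycle, and the Hamilton-path result you allude to (stated after \cref{oldthm:L1-hamilton}) applies only to pairs at distance at least three, not to adjacent vertices. Second, the crux of your inductive step---that if no admissible $y$ on~$C$ has $y^-y^+\in E(G)$ then $G=K_{p,p}$---is asserted only as an intention (``should force'', ``would yield'') with no mechanism. Simply iterating the $L_0$ inequality at each $y$ does not by itself forbid edges within a parity class of~$C$, nor does it explain how the bipartite structure propagates to vertices off~$C$; arguments of this type typically require careful rerouting and edge-counting (compare the proofs of \cref{thm:cycleextension,thm:pathextension-ge3} in this paper, which handle the analogous obstruction for~$L_1$-graphs), not a bare appeal to the degree bound. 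The downward-induction skeleton is plausible, but both the base and the obstructed case need substantive work that your proposal does not supply.
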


\medskip
In 2004, Diestel and Kühn~\cite{diestel04a} suggested a new concept for
infinite locally finite graphs (infinite graphs with only finite vertex degrees),
called \emph{Hamilton circles},
which are analogues of Hamilton cycles in finite graphs.
Let $G$ be an infinite locally finite graph.
A \emph{ray} in~$G$ is a one-way infinite path.
We define an equivalence relation on the set of rays in~$G$
by saying that two rays are equivalent if
they have a subray in the same component of $G-S$ for every finite vertex set~$S$.
The equivalence classes of this relation are called the \emph{ends} of~$G$,
and can be seen as points at infinity.
The \emph{Freudenthal compactification}~$\fcomp{G}$ of~$G$
is a topological space constructed by viewing $G$ as a 1-complex,
and adding the ends of~$G$ as additional points.
Finally, a Hamilton circle in the Freudenthal compactification~$\fcomp{G}$
is a homeomorphic image of the unit circle
that passes through every vertex and every end exactly once.
For a more thorough exposition, see~\cite{diestel09}.

Diestel~\cite{diestel10} launched the ambitious project of
extending results on finite Hamilton cycles to Hamilton circles.
Georgakopoulos~\cite{georgakopoulos09} showed that
if $G$ is the square of a 2-connected, infinite, locally finite graph,
then $\fcomp{G}$ has a Hamilton circle,
extending Fleischner's theorem~\cite{fleischner74} for finite graphs.
Heuer~\cite{heuer15} and Hamann et~al.~\cite{hamann16} showed that
the Freudenthal compactification of every connected, locally connected, infinite, locally finite, claw-free graph has a Hamilton circle,
extending Oberly--Sumner's theorem~\cite{oberly79}.

Heuer~\cite{heuer16} furthermore proved that
the Freudenthal compactification of every claw-free, locally connected graph
satisfying the conditions of \cref{oldthm:L0-hamiltonian}
has a Hamilton circle.
It is easy to see that
for a triple $u,w,v$ with $d(u,v)=2$ and $w\in N(u)\cap N(v)$ in a claw-free graph,
the inequality $d(u)+d(v)\ge\card{N(u)\cup N(v)\cup N(w)}$
is equivalent to the inequality $\card{N(u)\cap N(v)}\ge2$.
Thus the result of Heuer~\cite{heuer16} can be reformulated as follows:
\begin{oldtheorem}
\label{oldthm:claw-free-ge2-circle}
Let $G$ be a locally finite, connected, claw-free graph on at least three vertices
such that $\card{N(u)\cap N(v)}\ge2$ for each pair of vertices $u,v$ with $d(u,v)=2$.
Then $\fcomp{G}$ has a Hamilton circle.
\end{oldtheorem}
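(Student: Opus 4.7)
The plan is to prove the statement as a direct reformulation of Heuer's result~\cite{heuer16}, which already provides a Hamilton circle in~$\fcomp{G}$ under claw-freeness together with the $L_0$-style inequality of \cref{oldthm:L0-hamiltonian} and local connectedness. The only substantive task is to establish the promised equivalence, under claw-freeness, between that inequality and the simpler common-neighbor condition $\card{N(u)\cap N(v)}\ge2$.

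To do so, I would fix a triple $u,w,v$ with $d(u,v)=2$ and $w\in N(u)\cap N(v)$ and exploit claw-freeness at~$w$: the neighborhood $N(w)$ contains the non-adjacent pair $u,v$, so every other vertex of $N(w)$ is forced to be adjacent to $u$ or to $v$. This yields the inclusion $N(w)\subseteq N(u)\cup N(v)\cup\{u,v\}$ and, since $u$ and $v$ themselves lie outside $N(u)\cup N(v)$, the identity $\card{N(u)\cup N(v)\cup N(w)}=\card{N(u)\cup N(v)}+2$. Combined with the standard identity $d(u)+d(v)=\card{N(u)\cup N(v)}+\card{N(u)\cap N(v)}$, the $L_0$-inequality collapses to $\card{N(u)\cap N(v)}\ge2$, exactly as the paragraph preceding the statement asserts.

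With the equivalence of the two hypotheses in hand, Heuer's theorem applies and yields the Hamilton circle. The subtlety that remains, and what I expect to be the main obstacle, is the \emph{local connectedness} assumption that appears in Heuer's original statement but not in the reformulation: one must verify that either this condition is automatically satisfied on the (infinite, locally finite) graphs to which Heuer's construction really applies, or that the small finite exceptions where it can fail---such as $C_4$---are themselves already Hamilton circles and hence do not require Heuer's machinery at all. The latter is in fact free: any finite graph meeting the hypotheses is Hamiltonian by \cref{oldthm:L1-hamilton}, because every graph in the exceptional family~$\mathcal K$ contains an induced $K_{1,3}$ and is therefore excluded by claw-freeness. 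So the plan reduces to the brief neighborhood computation above, plus a short case-split between the finite and infinite situations.
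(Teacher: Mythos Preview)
Your core argument---the neighbourhood computation showing that, for a claw-free triple $u,w,v$ with $d(u,v)=2$, the $L_0$ inequality is equivalent to $\card{N(u)\cap N(v)}\ge2$---is correct and is exactly what the paper does. The paper's entire ``proof'' is the single sentence asserting this equivalence and then invoking Heuer~\cite{heuer16}; you have simply written out the details.

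Where you diverge from the paper is in worrying about local connectedness. The paper does not address this at all: it states Heuer's hypothesis as ``claw-free, locally connected'' and then silently drops local connectedness in the reformulation. Almost certainly ``locally connected'' there is a slip for ``locally finite'' (note that the sentence as written omits local finiteness entirely, which is indispensable for the Freudenthal compactification, and Heuer's title is ``\dots\ in locally finite claw-free graphs''). Under that reading your case-split is unnecessary and the reformulation is immediate.

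If, however, one takes the paper's wording at face value, then you are right to flag the issue, and your treatment of the finite case (claw-freeness excludes $\mathcal K$, so \cref{oldthm:L1-hamilton} applies) is sound. But your proposal does \emph{not} close the infinite case: you say one ``must verify'' that local connectedness holds automatically there, without doing so. It does not follow trivially---$C_4$ already shows the hypotheses do not force local connectedness in general---so a genuine argument would be needed. In short: your proof matches the paper's on the only point the paper actually argues, and your extra scruple either dissolves (most likely) or exposes a gap that neither you nor the paper fills.
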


Kündgen, Li, and Thomassen~\cite{kundgen17} introduced
another concept for infinite locally finite graphs:
A closed curve in the Freudenthal compactification~$\fcomp{G}$ is called
a \emph{Hamilton curve} if it meets every vertex exactly once,
but is allowed to meet the ends of~$\fcomp{G}$ multiple times.
They showed that the condition of \cref{oldthm:L0-hamiltonian}
implies the existence of a Hamilton curve.

\medskip
In this article,
which is partly based on the author's master's thesis~\cite{granholm16},
we investigate $L_1$-graphs in the same spirit as~\cref{oldthm:asratian96b},
and show that they, unlike $L_0$-graphs, need not be pancyclic.
However, we prove that if $G$ is a locally finite graph (not necessarily finite)
satisfying the conditions of~\cref{oldthm:L1-hamilton}, then
\begin{itemize}
\item any cycle~$C$ in $G$ that does not contain all vertices of~$G$
can be extended to a larger cycle
containing all vertices of~$C$ and at most two other vertices;
\item for any pair of vertices $x,y$ with no common neighbors
and any $\pathto xy$-path~$P$ in $G$ that does not include all vertices of~$G$,
there is a longer $\pathto xy$-path containing
all vertices of~$P$ and at most two other vertices.
\end{itemize}
Furthermore we show that if $G$ is an infinite, locally finite graph
satisfying the conditions of~\cref{oldthm:L1-hamilton},
then $\fcomp{G}$ has a Hamilton curve.
Finally, we provide a characterization of all connected bipartite $L_1$-graphs.

\section{Results}

The main result of this paper is the following theorem:
\begin{theorem}
\label{thm:cycleextension}
Let $G$ be a connected, locally finite $L_1$-graph on at least three vertices such that
$\card{N(u)\cap N(v)}\ge2$ for each pair of vertices $u,v$ with $d(u,v)=2$.
Then for every cycle~$C_n$ of length~$n$ in~$G$
that does not contain all vertices of~$G$,
there is a cycle~$C_{n+\ell}$ of length~$n+\ell$, where~$1\le\ell\le2$,
such that $V(C_n)\subset V(C_{n+\ell})$,
unless $n=\card{V(G)}-1$ and $G\in\mathcal{K}$.
\end{theorem}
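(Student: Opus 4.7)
My plan is to argue by contradiction: assume that $C=C_n$ is a cycle in $G$ with $V(C)\subsetneq V(G)$ for which no cycle $C_{n+1}$ or $C_{n+2}$ contains $V(C)$, and deduce that $n=\card{V(G)}-1$ and $G\in\mathcal{K}$.

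Fix an orientation $\dir C$. Since $G$ is connected, there is a vertex $v\notin V(C)$ with a neighbor $x\in V(C)$. If $v\sim x^+$, then $x,x^+$ are consecutive neighbors of $v$ on $C$ and inserting $v$ between them yields a $C_{n+1}$, contrary to assumption; so $d(v,x^+)=2$, and the common-neighbor hypothesis supplies some $w\in(N(v)\cap N(x^+))\setminus\{x\}$. If $w\notin V(C)$, replacing the edge $xx^+$ of $C$ by the path $x\,v\,w\,x^+$ gives a $C_{n+2}$ containing $V(C)\cup\{v,w\}$, again a forbidden extension; hence $w\in V(C)$. The symmetric argument with $x^-$ yields the same conclusion on the other side. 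Moreover, if $w=x^-$ then $x^-$ and $x$ are consecutive neighbors of $v$, so single-vertex insertion extends $C$; thus $w\ne x^{\pm}$, and $w$ is a ``distant'' vertex on $V(C)$ providing a chord $wx^+$ of $C$. Running the same reasoning at every neighbor of $v$ on $C$ shows that the neighbors of $v$ on $C$ are pairwise non-consecutive and that each such neighbor is the starting point of a chord to another neighbor of $v$.

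I would then combine these chord patterns with the full $L_1$ inequality $d(u)+d(v)\ge\card{N(u)\cup N(v)\cup N(w)}-1$ applied to suitable triples $(v,u,w)$ — with $u$ chosen among the non-neighbors of $v$ on $C$ and $w$ an appropriate common cycle-neighbor — to force the structure on $V(C)\cup\{v\}$ to be rigid: $n=2p$ for some $p\ge2$, the non-neighbors of $v$ on $C$ form an independent set of size $p$, and every neighbor of $v$ on $C$ is adjacent to every non-neighbor. This places $G[V(C)\cup\{v\}]$ between $K_{p,p+1}$ and $K_p\join\complement{K_{p+1}}$. A final step rules out any additional off-cycle vertex by applying the same obstructions to a hypothetical $v'\notin V(C)\cup\{v\}$ and deriving a contradiction from the combined degree-sum constraints in the already-established rigid structure. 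This gives $V(G)=V(C)\cup\{v\}$, so $n=\card{V(G)}-1$ and $G\in\mathcal{K}$, as claimed.

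The hard part will be the ``propagation'' step, namely the sub-case in which the forced common neighbor $w$ lies on $C$ but is neither $x^-$ nor $x^{++}$: here no direct insertion or path-substitution produces $C_{n+1}$ or $C_{n+2}$, and one has to chain successive applications of the common-neighbor hypothesis and the $L_1$ inequality around the entire cycle in order to manufacture the rigid chord skeleton. The one-unit slack in the $L_1$-condition must be tracked precisely so that it is not spent twice in the bookkeeping, and this is the aspect that most closely mirrors, and presumably builds on, the proof of \cref{oldthm:L1-hamilton}.
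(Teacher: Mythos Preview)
Your overall outline matches the paper's: argue by contradiction, fix an off-cycle vertex $v$, study $W=N(v)\cap V(C)$ and $W^+$, show $W^+\cup\{v\}$ is independent and that common neighbors of $v$ and any $w_i^+$ lie in $W$, then propagate to force $n=2p$ with $v$ adjacent to every second vertex. The opening paragraph (ruling out $v\sim x^+$ and off-cycle common neighbors) is correct and coincides with the start of the paper's Claim~1.

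However, the plan is missing the specific mechanism that makes the propagation work. Applying the $L_1$-inequality to the triple $(w_i^+,w_i,v)$ only gives the one-sided estimate
\[
\card{N(w_i^+)\cap W}\ \ge\ \card{N(w_i)\cap W^+},
\]
using that $W^+\cup\{v\}$ is independent and $v\in N(w_i)$ absorbs the $-1$. The paper then \emph{double-counts} $e(W,W^+)=\sum_i\card{N(w_i^+)\cap W}=\sum_i\card{N(w_i)\cap W^+}$, which forces equality for every~$i$ and, crucially, upgrades the containment $N(w_i)\cap W^+\subseteq N(w_i)\setminus\bigl(N(w_i^+)\cup N(v)\cup\{v\}\bigr)$ to an equality. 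That set equality is exactly what drives the propagation in Claim~2: if $w_{k}^+\ne w_{k+1}^-$ then $w_{k+1}^-\notin W^+$ forces $w_{k+1}^-\in N(w_{k+1}^+)$, and so on around the cycle. Your sketch names the propagation as the hard part but does not supply this counting step; without it the ``one-unit slack'' you mention cannot be tracked, and I do not see an alternative argument in your plan.

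The second gap concerns the endgame. You aim to show directly that every $w_i$ is adjacent to every $w_j^+$ so that $G[V(C)\cup\{v\}]$ sits between $K_{p,p+1}$ and $K_p\join\overline{K_{p+1}}$, and then separately rule out any further off-cycle vertex. The paper does neither: it only shows that no vertex of $W^+$ has a neighbour outside $C$ (by applying the whole every-second-vertex argument to a hypothetical such neighbour $u$ and exhibiting an explicit $(n{+}2)$-cycle), concludes that $G-W$ has at least $p{+}1$ components, hence $G$ is not $1$-tough, and then invokes the cited lemma (\cref{thm:1-tough-or-K}) to get $G\in\mathcal K$. Your direct route overclaims the bipartite completeness, which the cycle argument does not establish, and leaves open how you would handle a second off-cycle vertex $v'$ adjacent only to $W$; the toughness lemma absorbs both issues at once.
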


Unlike for \cref{oldthm:nara},
graphs satisfying the conditions of \cref{oldthm:L1-hamilton} need not be pancyclic,
so \cref{thm:cycleextension} is best possible.
The graph
$K_1\seqjoin\complement{K_2}\seqjoin K_2\seqjoin K_2\seqjoin\complement{K_2}\seqjoin K_1$
(see \cref{fig:K1-E2-K2-K2-E2-K1}), for example,
has 10~vertices and does not contain a 9-cycle.
In general, the graph
\[G=K_1\seqjoin\complement{K_2}\seqjoin
{\underbrace{K_2\seqjoin\dotsb\seqjoin K_2}_{\mathclap{\text{at least two copies of }K_2}}}
\seqjoin\complement{K_2}\seqjoin K_1\]
does not contain any cycle of length $\card{V(G)}-1$.
Furthermore, the graph
in \cref{fig:K1-E2-E2-K2-K2-E2-E2-K1}
has 14~vertices and does not contain any cycle of length 11~or~13,
and can be extended to an infinite family of graphs in the same way as above.
\begin{figure}
\centering
\fignotpanAone{2}
\caption{The graph
$K_1\seqjoin\complement{K_2}\seqjoin K_2\seqjoin K_2\seqjoin\complement{K_2}\seqjoin K_1$}
\label{fig:K1-E2-K2-K2-E2-K1}
\end{figure}
\begin{figure}
\centering
\fignotpanAonethree{2}
\caption{The graph
$K_1\seqjoin\complement{K_2}\seqjoin\complement{K_2}\seqjoin K_2
\seqjoin K_2\seqjoin\complement{K_2}\seqjoin\complement{K_2}\seqjoin K_1$}
\label{fig:K1-E2-E2-K2-K2-E2-E2-K1}
\end{figure}

It is easy to see that every vertex in a graph
satisfying the conditions of \cref{thm:cycleextension}
lies on a cycle of length at most~$4$.
Thus we can draw the following conclusions:
\begin{corollary}
Let $G$ be a finite connected $L_1$-graph on at least three vertices such that
$\card{N(u)\cap N(v)}\ge2$ for each pair of vertices $u,v$ with $d(u,v)=2$.
Then for each vertex $x\in V(G)$ there is
a number $r$ and
a sequence of integers $n_1,n_2,\dotsc,n_r$,
depending on $x$,
such that
$n_1\le4$,
$n_r=\card{V(G)}$ (unless $G\in\mathcal K$, in which case $n_r=\card{V(G)}-1$),
and
$1\le n_{i+1}-n_i\le2$ for each $i=1,\dotsc,r-1$,
and a sequence of cycles $C_{n_1},C_{n_2},\dotsc,C_{n_r}$
of lengths $n_1,n_2,\dotsc,n_r$ respectively,
such that
$x\in V(C_{n_1})\subset V(C_{n_2})\subset \dotsb\subset V(C_{n_r})$.
\end{corollary}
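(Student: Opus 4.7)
My plan is to take a cycle $C=C_n$ of length $n<\card{V(G)}$ with a fixed direction $\dir C$, and set $W=V(G)\setminus V(C)\ne\emptyset$; by connectedness of $G$ some $w\in W$ has a neighbor $v\in V(C)$. The goal is to extend $C$ either by one vertex or by two vertices, the only exception being when $G\in\mathcal K$ and $n=\card{V(G)}-1$. The first reduction is the \emph{one-vertex extension}: if some $w\in W$ has two neighbors $v,v^+$ that are consecutive on $\dir C$, then replacing the edge $vv^+$ of~$C$ by the path $v w v^+$ gives a cycle of length $n+1$ strictly containing $V(C)$. From now on I assume that no vertex of $W$ has two consecutive neighbors on~$C$. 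Then for any such pair $(w,v)$ we have $v^+\notin N(w)$, hence $d(w,v^+)=2$ with $v$ as a common neighbor, and the hypothesis $\card{N(w)\cap N(v^+)}\ge 2$ yields a second common neighbor $u\ne v$.

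The second reduction is the \emph{two-vertex extension}: if $u\in W$, then $v,w,u,v^+$ is a path of length~$3$ whose interior lies in~$W$, and substituting this path for the edge $vv^+$ of~$C$ gives a cycle of length $n+2$ containing $V(C)\cup\{w,u\}$. I may therefore further assume that for every $w\in W$ and every $v\in N(w)\cap V(C)$, all common neighbors of $w$ and $v^+$ lie on~$C$; the same holds symmetrically for $v^-$. Under these reductions I would apply the $L_1$-inequality to the pair $(w,v^+)$, which is at distance~$2$ via the common neighbor~$v$, giving
\[
d(w)+d(v^+)\ge\card{N(w)\cup N(v^+)\cup N(v)}-1,
\]
which via $d(w)+d(v^+)=\card{N(w)\cup N(v^+)}+\card{N(w)\cap N(v^+)}$ rewrites as
\[
\card{N(v)\setminus(N(w)\cup N(v^+))}\le\card{N(w)\cap N(v^+)}+1.
\]

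Combined with the restrictions that every common neighbor of $w$ and $v^+$ is on~$C$, that $v^-\notin N(w)$, and that no vertex of~$W$ has two consecutive neighbors on~$C$, I expect this inequality to pin down a rigid local picture at~$w$: the neighbors of~$w$ on~$C$ appear every second vertex along $\dir C$, each separating vertex of~$C$ is adjacent to every vertex of $N(w)$, and no cycle vertex outside this pattern has a neighbor in~$W$. Propagating the constraint around $\dir C$ and using the symmetric statement at $v^-$, I would identify $V(G)$ with the bipartition $A\cup B$ of a graph in $\mathcal K$, taking $A=N(w)$, $B=V(G)\setminus A$, $\card{B}=\card{A}+1$ and $n=2\card{A}=\card{V(G)}-1$. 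The main obstacle will be this propagation step: the $L_1$-inequality is local to one edge $vw$, while membership in $\mathcal K$ is global, so the argument needs careful bookkeeping around $\dir C$ to show that the alternating pattern at one neighbor of~$w$ persists around the whole cycle, that no edges appear inside $B$, and that every $b\in B$ is joined to every $a\in A$.
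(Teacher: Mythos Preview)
Your proposal is really an attempt to prove the cycle-extension theorem (Theorem~\ref{thm:cycleextension}) from scratch. In the paper this Corollary is not proved independently: it is derived in one line from Theorem~\ref{thm:cycleextension} together with the observation that every vertex lies on a cycle of length at most~$4$ (if $x$ has two adjacent neighbors it lies on a triangle; otherwise any two neighbors $a,b$ of~$x$ satisfy $d(a,b)=2$, so $\card{N(a)\cap N(b)}\ge2$ yields a $4$-cycle through~$x$). You never address this base case, so as written you cannot conclude $n_1\le4$.

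For the extension step itself your setup matches the paper's proof of Theorem~\ref{thm:cycleextension}, but two key devices are missing. First, the $L_1$-inequality at a single neighbor~$v$ of~$w$ gives only $\card{N(v)\cap A^+}\le\card{N(v^+)\cap A}$, where $A=N(w)\cap V(C)$ and $A^+$ is its set of successors; the paper upgrades these to equalities by summing over all $v\in A$ and observing that both sides equal $e(A,A^+)$. This double count is what forces the containment $N(v)\setminus\bigl(N(w)\cup N(v^+)\cup\{w\}\bigr)\subseteq A^+$ to be an equality and drives the every-second-vertex conclusion; without it your ``propagation around $\dir C$'' has no mechanism. Second, once the alternating pattern is established, the paper does not verify the bipartite structure of~$\mathcal K$ by hand as you plan. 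Instead it shows that no vertex of $A^+\cup\{w\}$ has a neighbor off~$C$ (such a neighbor would also hit every second vertex of~$C$ and produce an $(n{+}2)$-cycle), so removing~$A$ leaves at least $\card{A}+1$ components and $G$ is not $1$-tough; membership in~$\mathcal K$ then follows from the cited lemma that every $2$-connected $L_1$-graph is either $1$-tough or in~$\mathcal K$. Your plan to check directly that $B$ is independent and that every $b\in B$ is joined to every $a\in A$ is substantially harder and is not needed.
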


\begin{corollary}
Let $G$ be a finite connected $L_1$-graph on at least three vertices such that
$\card{N(u)\cap N(v)}\ge2$ for each pair of vertices $u,v$ with $d(u,v)=2$.
Then for each vertex~$x\in V(G)$ and each $m=4,\dotsc,\card{V(G)}$,
the vertex $x$ lies on a cycle of length $m$ or $m-1$.
\end{corollary}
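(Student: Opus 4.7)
The plan is to derive this corollary directly from the preceding corollary, which already produces, for each vertex $x\in V(G)$, a nested sequence of cycles $C_{n_1}\subset C_{n_2}\subset\dotsb\subset C_{n_r}$ all passing through $x$, whose lengths $n_i$ start at some value $n_1\le 4$, increase by $1$ or $2$ at each step, and end at $n_r=\card{V(G)}$, or at $n_r=\card{V(G)}-1$ in the exceptional case $G\in\mathcal{K}$.

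Given $m\in\{4,\dotsc,\card{V(G)}\}$, the task is simply to locate an index $i$ with $n_i\in\{m,m-1\}$. First I would dispose of the atypical case: if $G\in\mathcal{K}$ and $m=\card{V(G)}$, then $n_r=m-1$, so $x$ already lies on the cycle $C_{n_r}$ of length $m-1$ and we are done. In every other situation $m\le n_r$. Since $n_1\le 4\le m$, the value $m$ lies in the closed interval $[n_1,n_r]$ spanned by the sequence. Either $m$ itself occurs as some $n_i$, in which case $x\in V(C_{n_i})$ is on a cycle of length $m$; or $m$ is skipped, which can happen only where two consecutive values jump by exactly~$2$, and then the smaller of the two must equal $m-1$, so some $n_i=m-1$ and $x$ lies on a cycle of length $m-1$.

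I do not anticipate any real obstacle here, because the starting bound $n_1\le 4$ and the step sizes~$1$ or~$2$ guaranteed by the previous corollary match the $\{m,m-1\}$ granularity of the present statement exactly. The only points requiring a little care are the two boundary issues: checking that $n_1\le 4\le m$ so that the argument is applicable even when $m=4$, and recalling that the case $G\in\mathcal{K}$ only shortens the top of the sequence by one, which is precisely what the conclusion "$m$ or $m-1$" tolerates.
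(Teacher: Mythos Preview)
Your proposal is correct and matches the paper's own approach: the paper simply remarks that every vertex lies on a cycle of length at most~$4$ and then states this corollary (along with the preceding one) as an immediate consequence of Theorem~1, without writing out the index-chasing argument that you supply explicitly.
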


Using the same reasoning we also get the following:
\begin{corollary}
Let $G$ be a connected, infinite, locally finite $L_1$-graph on at least three vertices such that
$\card{N(u)\cap N(v)}\ge2$ for each pair of vertices $u,v$ with $d(u,v)=2$.
Then for each vertex~$x\in V(G)$ and each $m\ge4$,
the vertex $x$ lies on a cycle of length $m$ or $m-1$.
\end{corollary}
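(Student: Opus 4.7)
The plan is to mimic the argument for the preceding finite corollary, noting that in an infinite locally finite graph the exceptional case of \cref{thm:cycleextension} never arises. Every member of~$\mathcal{K}$ is finite, so $G \notin \mathcal{K}$; and since $V(G)$ is infinite, no finite cycle contains all vertices of~$G$. Hence \cref{thm:cycleextension} applies to every cycle in~$G$ and always produces an extension by one or two new vertices.

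First I would verify the short preliminary fact (already flagged between the statement of \cref{thm:cycleextension} and its first corollary) that every vertex $x$ lies on a cycle of length at most~$4$. Since $G$ is connected on at least three vertices and locally finite, $x$ has at least two neighbors $y_1, y_2$. If $y_1y_2 \in E(G)$, we get a triangle; otherwise $d(y_1,y_2) = 2$ and $x \in N(y_1) \cap N(y_2)$, so the hypothesis $\card{N(y_1) \cap N(y_2)} \geq 2$ supplies a common neighbor $w \neq x$ and a $4$-cycle $xy_1wy_2x$.

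Starting from such a cycle $C_{n_1}$ through $x$ with $n_1 \in \{3,4\}$, I would iterate \cref{thm:cycleextension} to build a sequence of cycles with $V(C_{n_1}) \subset V(C_{n_2}) \subset \dotsb$, all containing~$x$, and with $n_{i+1} - n_i \in \{1,2\}$. Because $G$ is infinite this process never terminates, so the $n_i$ form a strictly increasing unbounded sequence starting at $n_1 \leq 4$. Given any $m \geq 4$, taking the largest index $i$ with $n_i \leq m$ (which exists since $n_1 \leq m$ and the sequence is unbounded) yields, via $n_{i+1} > m$ and $n_{i+1} \leq n_i + 2$, that $n_i \in \{m-1, m\}$, giving the desired cycle through~$x$ of length $m$ or $m-1$.

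The argument is genuinely routine once \cref{thm:cycleextension} is in hand; the only step that warrants explicit care is the opening remark disposing of the $\mathcal{K}$-exception and of the "cycle contains all vertices" exit condition in the infinite setting, after which the proof reduces to a short inductive construction of the cycle sequence followed by the elementary pigeonhole argument above.
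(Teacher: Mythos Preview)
Your proposal is correct and follows exactly the argument the paper leaves implicit (``using the same reasoning''): start from a short cycle through $x$, iterate \cref{thm:cycleextension} indefinitely (the $\mathcal{K}$-exception being vacuous in the infinite case), and pigeonhole on the resulting length sequence. One small imprecision: your justification that $x$ has at least two neighbors cites only connectedness on $\ge 3$ vertices and local finiteness, which is insufficient (a path is a counterexample); the correct reason is \cref{lem:ge22connected}, or equivalently a direct use of the $\card{N(u)\cap N(v)}\ge 2$ hypothesis applied to $x$ and any vertex at distance~$2$ from it.
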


\medskip
We will also prove the following theorems:
\begin{theorem}
\label{thm:pathextension-adjacent}
Let $G$ be a connected, locally finite $L_1$-graph on at least three vertices such that
$\card{N(u)\cap N(v)}\ge2$ for each pair of vertices $u,v$ with $d(u,v)=2$,
and let $x$~and~$y$ be two adjacent vertices in~$G$ with no neighbors in common.
Then for every $\pathto xy$-path~$P_n$ with $n$~vertices in~$G$
that does not contain all vertices of~$G$,
there is an $\pathto xy$-path~$P_{n+\ell}$ with $n+\ell$~vertices, $1\le\ell\le2$,
such that $V(P_n)\subset V(P_{n+\ell})$,
unless $n=\card{V(G)}-1$ and $G\in\mathcal{K}$.
\end{theorem}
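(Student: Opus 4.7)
The natural plan is to reduce to \cref{thm:cycleextension} by closing $P_n$ into a cycle. Since $x$ and $y$ are adjacent, $C_n := P_n + xy$ is a cycle of length $n$ in $G$. Applying \cref{thm:cycleextension} to $C_n$, unless $n = \card{V(G)} - 1$ and $G \in \mathcal{K}$ (the case permitted by our statement), we obtain a cycle $C_{n+\ell}$ with $1 \le \ell \le 2$ and $V(C_n) \subset V(C_{n+\ell})$. If the edge $xy$ still lies on $C_{n+\ell}$, then $C_{n+\ell} - xy$ is a path on $n + \ell$ vertices from $x$ to $y$ containing $V(P_n)$, which is the required $P_{n+\ell}$.

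The delicate case is that $xy \notin E(C_{n+\ell})$, i.e., the extension re-routes around the edge $xy$. A one-vertex re-routing (inserting a single vertex between $x$ and $y$ on the cycle) would produce a common neighbour of $x$ and $y$, contradicting the hypothesis, so $\ell = 2$ and there are distinct vertices $z, z' \notin V(P_n)$ with $xz$, $zz'$, $z'y \in E$. Neither $xz'$ nor $yz$ can be an edge either, since that would place $z'$ or $z$ into $N(x) \cap N(y)$.

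In this residual case one must build the path by hand. Writing $P_n = x v_1 v_2 \dots v_{n-2} y$, the plan is to apply the $\card{N(u) \cap N(v)} \ge 2$ hypothesis to distance-$2$ pairs involving $z$ and $z'$ and the internal vertices of $P_n$ (such as $(z, v_1)$, $(z', v_{n-2})$, $(z, y)$, and $(z', x)$), and the $L_1$-inequality to triples among $\{x, y, z, z'\}$ together with their neighbours on $P_n$. The aim is to exhibit either (i)~a single vertex $u \notin V(P_n)$ adjacent to two consecutive vertices of $P_n$, or (ii)~two adjacent vertices $u, u' \notin V(P_n)$ joined to two consecutive vertices of $P_n$; inserting them into $P_n$ then produces the required $P_{n+1}$ or $P_{n+2}$ with endpoints still $x$ and $y$.

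The main obstacle is this last step: showing that whenever \cref{thm:cycleextension} is forced to route through $xy$, an alternative insertion is available at an edge of $P_n$ proper. This amounts to re-running the local argument behind \cref{thm:cycleextension} while forbidding the artificial edge $xy$ used to form $C_n$, and pinning down a suitable off-path common neighbour when $z$ and $z'$ happen to have few neighbours among $v_1, \dots, v_{n-2}$ is the technically delicate part.
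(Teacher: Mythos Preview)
Your reduction to \cref{thm:cycleextension} has a real gap, and it is precisely in the step you flag as ``delicate''. The statement of \cref{thm:cycleextension} only gives you \emph{some} cycle $C_{n+\ell}$ with $V(C_n)\subset V(C_{n+\ell})$; it does not promise that $C_{n+\ell}$ is obtained from $C_n$ by inserting the new vertices into a single edge. In fact the proof of \cref{thm:cycleextension} produces extensions that reverse whole segments of $C_n$ (e.g.\ $w_ivw_j\revdir C_nw_i^+w_j^+\dir C_nw_i$) or use chords such as $w_i^-w_i^+$, and these drop \emph{several} edges of $C_n$ simultaneously. So your dichotomy ``either $xy\in E(C_{n+\ell})$ or the new vertices sit between $x$ and $y$'' is false: one can lose the edge $xy$ while the new vertex $v$ lands far from both $x$ and $y$, and then there is no obvious $\pathto xy$-Hamilton path inside $C_{n+\ell}\cup\{xy\}$. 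In particular the sentence ``so $\ell=2$ and there are distinct vertices $z,z'\notin V(P_n)$ with $xz,zz',z'y\in E$'' is unjustified, and the plan you sketch for that case (finding an insertable $u$ or $u,u'$ off the path) never gets off the ground.

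The paper does not use \cref{thm:cycleextension} as a black box. It closes $P_n+xy$ into a cycle only to borrow the successor notation ($y^+=x$), and then reproves the three claims directly, each time reaching a contradiction by exhibiting a longer $\pathto xy$-\emph{path}. Claim~1 carries over verbatim (once one fixes $vy\notin E$, all the Claim~1 constructions keep the edge $xy$, so deleting it yields a path). The real work is Claim~2: the cyclic ``without loss of generality $w_1^+\neq w_2^-$'' is no longer available, so instead one takes the \emph{smallest} $k$ with $w_k^+\neq w_{k+1}^-$, sets $W_1=\{w_1,\dots,w_k\}$, shows $w_i^+w_j\notin E$ for $j>k$, and then a second edge-count between $W_1$ and $W_1^+$ forces $N(v)\cap N(w_j^+)=\{w_j\}$, contradicting the $\card{N(u)\cap N(v)}\ge2$ hypothesis. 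After that, the hypothesis $N(x)\cap N(y)=\emptyset$ is used directly to get $w_1=x$ and $y\in W^+$, hence $w_p=y^-$. You actually name this route in your last paragraph (``re-running the local argument behind \cref{thm:cycleextension} while forbidding the artificial edge $xy$''): that is the whole proof, not a patch for a sub-case, and Claim~2 is where the genuinely new argument lives.
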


\begin{theorem}
\label{thm:pathextension-ge3}
Let $G$ be a connected, locally finite $L_1$-graph on at least three vertices such that
$\card{N(u)\cap N(v)}\ge2$ for each pair of vertices $u,v$ with $d(u,v)=2$,
and let $x$~and~$y$ be two vertices in~$G$ with $d(x,y)\ge3$.
Then for every $\pathto xy$-path~$P_n$ with $n$~vertices in~$G$
that does not contain all vertices of~$G$,
there is an $\pathto xy$-path~$P_{n+\ell}$ with $n+\ell$~vertices, $1\le\ell\le2$,
such that $V(P_n)\subset V(P_{n+\ell})$.
\end{theorem}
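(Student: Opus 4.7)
The plan is to argue by contradiction, in much the same spirit as the proof of \cref{thm:cycleextension}. Let $P$ be a longest $\pathto{x}{y}$-path in~$G$ with $V(P)\ne V(G)$, and suppose that no extension $P'$ with $V(P)\subset V(P')\subseteq V(P)\cup\{v,w\}$ by one or two vertices exists. Orient $P$ from $x$ to $y$; by connectivity and local finiteness there is a vertex $v\in V(G)\setminus V(P)$ with at least one neighbor on $P$, and we pick such a $v$ together with some $p\in V(P)\cap N(v)$. The extra leverage compared to the cyclic setting is the hypothesis $d(x,y)\ge 3$, which forbids any vertex from being adjacent to both $x$ and~$y$ and ensures that $x$ and $y$ share no common neighbor.

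First I would handle the case where we may choose $p$ to be an interior vertex of $P$. If either $p^- v$ or $p^+ v$ is an edge, then $v$ splices into $P$ directly. Otherwise $v$ and $p^+$ are at distance~$2$ via $p$, so by hypothesis they share a second common neighbor $w$. The $L_1$ inequality
\[d(v)+d(p^+)\ge \card{N(v)\cup N(p^+)\cup N(p)}-1,\]
together with its symmetric version at the pair $(v,p^-)$, pins down the location of $w$ sharply enough to construct a longer $\pathto{x}{y}$-path with $\card{V(P)}+1$ or $\card{V(P)}+2$ vertices, essentially as in the local step underlying \cref{thm:cycleextension}. This case yields the contradiction without ever touching the endpoint hypothesis.

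The genuinely new case is the one where every $v\in V(G)\setminus V(P)$ with $N(v)\cap V(P)\ne\emptyset$ meets $V(P)$ only at the endpoints $x$ or $y$. Because $d(x,y)\ge 3$, no such $v$ is adjacent to both $x$ and~$y$, so after possibly reversing~$P$ we may assume $vx\in E(G)$ and $vy\notin E(G)$. Now $d(v,x^+)=2$ via $x$, so $v$ and $x^+$ have a common neighbor $w\ne x$; moreover $w\ne y$ since $y\notin N(x^+)$ (again because $d(x,y)\ge 3$). The restriction that $v$ has no neighbor among the interior vertices of $P$, combined with the $L_1$ inequality applied at $(v,x^+)$ and at carefully chosen secondary triples, forces $w$ and, if needed, one further outside vertex into a position from which the desired longer $\pathto{x}{y}$-path can actually be built, contradicting the maximality of $P$. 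A symmetric argument applies at the endpoint $y$.

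I expect the main technical obstacle to be precisely this endpoint configuration. In the cyclic proof a rotation moves the obstructing vertex away from any fixed position, but an $\pathto{x}{y}$-path has two fixed endpoints, so the distance-three hypothesis must be invoked repeatedly to ensure that each candidate extension respects the endpoints. A delicate sub-case analysis -- distinguishing in particular whether the auxiliary common neighbor $w$ lies on $P$, off $P$, or coincides with $x^+$ itself -- will be needed, but in each instance the $L_1$ bound together with $\card{N(\cdot)\cap N(\cdot)}\ge 2$ and the endpoint non-adjacency forces the required extension.
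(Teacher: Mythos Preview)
Your case distinction is backwards, and this hides the real difficulty.

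The ``genuinely new case'' you isolate---where the outside vertex $v$ meets $P$ only in an endpoint---is in fact trivial. If $N(v)\cap V(P)=\{x\}$, then $d(v,x^+)=2$, so by hypothesis $v$ and $x^+$ share a second common neighbor~$w$. Since $w\in N(v)$ and $N(v)\cap V(P)=\{x\}$, this $w$ lies off $P$, and $x\,v\,w\,x^+\dir P\,y$ is an $\pathto{x}{y}$-path on $n+2$ vertices. No sub-case analysis is needed.

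Conversely, the case you dismiss as ``essentially as in the local step underlying \cref{thm:cycleextension}'' is where all the work lies, and the path version is \emph{harder} than the cycle version, not easier. In the cycle proof the structural claims eventually force $G\in\mathcal K$, and one stops. Here $d(x,y)\ge3$ rules out $\mathcal K$ (those graphs have diameter~$2$), so the argument must produce a contradiction by other means. The paper first establishes, as in the cycle case, that $W^+\cup\{v\}$ is independent and that $v$ is adjacent to every second vertex of the initial segment $x\dir P w_p$. It then needs three further claims with no analogue in \cref{thm:cycleextension}: that $\card{N(w_i^+)\cap W}$ takes a constant value~$t$ independent of~$i$; that $N(w_i^+)\subseteq V(P)$ for $i<p$; and that in fact $N(w_i^+)=W$ for $i<p$ (so $t=p$). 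These are obtained by a rotation trick---replacing $P$ by $P'=x\,v\,w_2\dir P\,y$ and rerunning the earlier claims with $x^+$ now playing the role of the outside vertex. Only after all this structure is in place does a final application of the $L_1$ inequality at the vertex $w_p^{++}$ (using $x w_p^+\in E(G)$, hence $d(x,w_p^{++})\le 2$) yield the contradiction $p\ge p+1$.

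None of these ingredients appears in your sketch, and the phrase ``pins down the location of $w$ sharply enough'' does not substitute for them: a single application of the $L_1$ bound at one interior neighbor $p$ does not suffice, because the obstruction is a global configuration in which $v$ is adjacent to many interior vertices simultaneously.
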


\Cref{thm:pathextension-adjacent,thm:pathextension-ge3}
can be stated together as a single result
by removing the requirement that $x$ and~$y$ are adjacent
from the formulation of \cref{thm:pathextension-adjacent},
that is, $x$ and~$y$ can be any pair of vertices without common neighbors.

The results in \cref{thm:pathextension-adjacent,thm:pathextension-ge3} are sharp;
in the graph in \cref{fig:K1-E2-K2-K2-E2-K1} there are no $\pathto xy$-paths with 9 vertices,
and in the graph in \cref{fig:K1-E2-E2-K2-K2-E2-E2-K1}
there are no $\pathto xy$-paths with 11 or 13 vertices.
%
%
Furthermore, the results
cannot simply be extended to cover the case when $x$ and $y$ have neighbors in common;
some counterexamples can be seen in \cref{fig:commonneighbors}.
\begin{figure}
\centering
\figxboxes
\qquad
\figPthreethreeKtwo
\caption{The graphs $K_2\join K_2\join K_2\join K_2\join K_2$ and $P_3\join 3K_2$}
\label{fig:commonneighbors}
\end{figure}

\begin{corollary}
Let $G$ be a finite connected $L_1$-graph on at least three vertices such that
$\card{N(u)\cap N(v)}\ge2$ for each pair of vertices $u,v$ with $d(u,v)=2$.
Then for every pair of vertices $x,y\in V(G)$ with no neighbors in common, there is
a number~$r$ and
a sequence of integers $n_1,n_2,\dotsc,n_r$,
depending on $x$~and~$y$,
such that
$n_1=d(x,y)+1$,
$n_r=\card{V(G)}$ (unless $G\in\mathcal K$, in which case $n_r=\card{V(G)}-1$),
and
$1\le n_{i+1}-n_i\le2$ for each $i=1,\dotsc,r-1$,
and a sequence of $\pathto{x}{y}$-paths $P_{n_1},P_{n_2},\dotsc,P_{n_r}$
with $n_1,n_2,\dotsc,n_r$ vertices, respectively,
such that
$V(P_{n_1})\subset V(P_{n_2})\subset \dotsb\subset V(P_{n_r})$.
\end{corollary}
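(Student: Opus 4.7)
The plan is to iterate \cref{thm:pathextension-adjacent,thm:pathextension-ge3}, starting from a shortest $\pathto xy$-path. First I would observe that since $x$ and $y$ have no common neighbors, $d(x,y)$ cannot equal $2$; hence $d(x,y)=1$ or $d(x,y)\ge 3$, and in either case exactly one of the two path-extension theorems applies to every $\pathto xy$-path in~$G$ that misses some vertex of~$G$. So I take $P_{n_1}$ to be any shortest $\pathto xy$-path, with $n_1=d(x,y)+1$.

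Next I would construct the sequence inductively: given an $\pathto xy$-path $P_{n_k}$ that does not contain every vertex of~$G$, apply \cref{thm:pathextension-adjacent} (when $d(x,y)=1$) or \cref{thm:pathextension-ge3} (when $d(x,y)\ge 3$) to obtain an $\pathto xy$-path $P_{n_{k+1}}$ with $V(P_{n_k})\subset V(P_{n_{k+1}})$ and $1\le n_{k+1}-n_k\le 2$. Because each step strictly enlarges the vertex set and $G$ is finite, the process terminates after finitely many steps, either when $n_k=\card{V(G)}$ or when we encounter the exceptional case of \cref{thm:pathextension-adjacent}, namely $n_k=\card{V(G)}-1$ with $G\in\mathcal{K}$. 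Setting $r$ to be the final index yields the required sequence $n_1<n_2<\dotsb<n_r$ with consecutive differences in $\set{1,2}$.

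The one point worth verifying, and which I expect to be the only mild subtlety rather than a real obstacle, is that the exception in the corollary lines up with the exception in \cref{thm:pathextension-adjacent}. Every graph $G\in\mathcal{K}$ contains $K_{p,p+1}$ as a spanning subgraph and thus has diameter~$2$, so $d(x,y)\in\set{1,2}$; combined with the fact that $d(x,y)\ne 2$ this forces $d(x,y)=1$, so the theorem whose exception is being invoked is indeed the one used at every step of the iteration. Apart from this bookkeeping, the corollary is a direct iteration of the two preceding theorems, whose hypotheses together cover exactly the class of vertex pairs under consideration.
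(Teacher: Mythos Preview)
Your proposal is correct and matches the paper's approach: the paper states this result as a direct corollary of \cref{thm:pathextension-adjacent,thm:pathextension-ge3} without giving a separate proof, and your iteration argument is exactly the intended derivation. Your observation that $d(x,y)\ne 2$ and that $G\in\mathcal K$ forces $d(x,y)=1$ (so the exception only arises through \cref{thm:pathextension-adjacent}) is the right bookkeeping; if you want to be fully airtight you might add a one-line remark that for $G\in\mathcal K$ no Hamilton $\pathto xy$-path exists (the independent set of size $p{+}1$ forces both endpoints of any Hamilton path into that set, whereas here one endpoint lies in the other part), so the iteration indeed halts at $\card{V(G)}-1$ rather than jumping past it.
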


\begin{corollary}
Let $G$ be a finite connected $L_1$-graph on at least three vertices such that
$\card{N(u)\cap N(v)}\ge2$ for each pair of vertices $u,v$ with $d(u,v)=2$.
Then for every pair of vertices $x,y\in V(G)$ with no neighbors in common
and each $m=d(x,y)+1,\dotsc,\card{V(G)}$,
there is an $\pathto{x}{y}$-path with $m$ or $m-1$ vertices.
\end{corollary}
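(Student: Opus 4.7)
The plan is to derive this corollary directly from the preceding corollary, which guarantees a suitable sequence of nested \pathto{x}{y}-paths, and then to match every target length $m$ to an appropriate element of that sequence.

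More precisely, I would first invoke the preceding corollary on the vertices $x$ and~$y$ to obtain a sequence of integers $n_1<n_2<\dotsb<n_r$ and \pathto{x}{y}-paths $P_{n_1},P_{n_2},\dotsc,P_{n_r}$ with $V(P_{n_1})\subset V(P_{n_2})\subset\dotsb\subset V(P_{n_r})$, where $n_1=d(x,y)+1$, $n_r=\card{V(G)}$ (or $n_r=\card{V(G)}-1$ if $G\in\mathcal K$), and $1\le n_{i+1}-n_i\le2$ for all~$i$. Fix any $m\in\set{d(x,y)+1,\dotsc,\card{V(G)}}$; the goal is to exhibit a path in the sequence whose vertex count is either $m$ or $m-1$.

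I would split into two cases. If $m\le n_r$, then because $n_1=d(x,y)+1\le m\le n_r$, there exists an index~$i$ with $n_i\le m\le n_{i+1}$ (taking $i=r$ and ignoring the upper endpoint when $m=n_r$). If $m=n_i$ for some~$i$ then $P_{n_i}$ itself is an \pathto xy-path with exactly $m$~vertices; otherwise $n_i<m<n_{i+1}$, and since the gap $n_{i+1}-n_i$ is at most~$2$, this forces $m-n_i=1$, so $m-1=n_i$ and $P_{n_i}$ is an \pathto xy-path with exactly $m-1$~vertices. The remaining case is $m>n_r$, which can only happen when $G\in\mathcal K$ and $m=\card{V(G)}=n_r+1$; then $P_{n_r}$ has $m-1$ vertices, as required.

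The argument is purely combinatorial bookkeeping, with no extremal work to do, so the only subtlety is ensuring that the exceptional case $G\in\mathcal K$ is handled cleanly at the top end of the range and that the indexing covers $m=n_r$ correctly. Everything else is immediate from the gap bound $n_{i+1}-n_i\le2$ and the initial value $n_1=d(x,y)+1$.
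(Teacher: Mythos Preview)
Your proposal is correct and matches the paper's intent: the paper states this corollary without proof, immediately after the sequence-of-paths corollary, so it is meant to follow by exactly the bookkeeping you carry out. Your handling of the exceptional case $G\in\mathcal K$ at $m=\card{V(G)}$ is also right (and this case is not vacuous, since e.g.\ in $K_{p,p+1}$ adjacent vertices in opposite parts have no common neighbors).
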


\begin{corollary}
Let $G$ be a connected, infinite, locally finite $L_1$-graph on at least three vertices such that
$\card{N(u)\cap N(v)}\ge2$ for each pair of vertices $u,v$ with $d(u,v)=2$.
Then for every pair of vertices $x,y\in V(G)$ with no neighbors in common
and each $m\ge d(x,y)+1$,
there is an $\pathto{x}{y}$-path with $m$ or $m-1$ vertices.
\end{corollary}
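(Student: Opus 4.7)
The plan is to deduce the corollary by iteratively applying \cref{thm:pathextension-adjacent,thm:pathextension-ge3}, which together handle every pair of vertices with no common neighbors: the first theorem covers $x\sim y$ and the second $d(x,y)\ge3$, and these are the only possibilities, since $d(x,y)=2$ would exhibit a common neighbor of $x$ and~$y$. I would start with a shortest $\pathto xy$-path $P_{n_1}$, which has $n_1=d(x,y)+1$ vertices, and extend it step by step.

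Two simple observations make each extension step applicable. First, every graph in the exceptional family $\mathcal{K}$ is finite, so the clause ``unless $n=\card{V(G)}-1$ and $G\in\mathcal{K}$'' of \cref{thm:pathextension-adjacent} cannot trigger in our infinite setting. Second, because $G$ has infinitely many vertices and every $\pathto xy$-path under consideration is finite (and hence omits some vertex of~$G$), the hypothesis of both extension theorems is fulfilled at every step. Thus from any $\pathto xy$-path on $n$~vertices we can obtain an $\pathto xy$-path on $n+1$ or $n+2$ vertices that contains all vertices of the original.

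Iterating produces an infinite strictly increasing sequence of integers $n_1<n_2<n_3<\dotsb$ with $n_{i+1}-n_i\in\{1,2\}$, together with an $\pathto xy$-path of each length~$n_i$. Given any $m\ge d(x,y)+1$, let $i$ be maximal with $n_i\le m$. Either $n_i=m$, in which case we already have a path with $m$ vertices; or $n_i<m<n_{i+1}$, which forces $n_{i+1}-n_i=2$ and hence $n_i=m-1$, producing a path with $m-1$ vertices.

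I do not expect any serious obstacle: the real work is carried out by the two path extension theorems, and the present argument only has to check that their exception clauses are vacuous in the infinite setting and to promote the one-step extension into coverage of every length~$m$, up to the unavoidable off-by-one gap that also appears in the cycle analogue of the corollary.
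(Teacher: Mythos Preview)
Your proposal is correct and matches the paper's intended reasoning: the corollary is stated without an explicit proof, being a direct consequence of iterating \cref{thm:pathextension-adjacent,thm:pathextension-ge3} starting from a shortest $\pathto xy$-path, with the exception clause vacuous since $G\notin\mathcal{K}$ for infinite~$G$. Your observation that $d(x,y)=2$ is impossible when $x$ and~$y$ share no neighbors, and your handling of the $m$/$m-1$ dichotomy via the maximal $n_i\le m$, are exactly what is needed.
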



\medskip
The local nature of the $L_1$-condition allows us to easily extend \cref{oldthm:L1-hamilton}
to Hamilton curves in infinite graphs.
\begin{theorem}
\label{thm:infinite}
Let $G$ be a connected, infinite, locally finite $L_1$-graph on at least three vertices such that
$\card{N(u)\cap N(v)}\ge2$ for each pair of vertices $u,v$ with $d(u,v)=2$.
Then $\fcomp{G}$ has a Hamilton curve.
\end{theorem}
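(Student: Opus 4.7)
The plan is to reduce \cref{thm:infinite} to a finitary cycle-covering statement and then invoke a compactness argument in the spirit of Kündgen, Li, and Thomassen~\cite{kundgen17}. The intermediate claim I would prove is that for every finite vertex set $F\subseteq V(G)$ there is a finite cycle of $G$ containing $F$. Granting this, a Hamilton curve in $\fcomp G$ is obtained by enumerating $V(G)=\{v_1,v_2,\dotsc\}$, picking for each $n$ a cycle $C_n$ in $G$ with $\{v_1,\dotsc,v_n\}\subseteq V(C_n)$, and extracting via a König/diagonal argument a subsequence on which the induced cyclic ordering of each initial segment $\{v_1,\dotsc,v_n\}$ stabilises. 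The limiting cyclic order on $V(G)$ is then realised as a closed curve in $\fcomp G$ by inserting an end of~$G$ at every ``non-adjacent jump''; this construction is routine for a connected, locally finite graph and is precisely the device used by Kündgen--Li--Thomassen.

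For the cycle-covering claim I would argue by induction on $\card F$. The base $\card F=1$ is immediate from the observation made in the paper just after \cref{thm:cycleextension}, namely that every vertex of $G$ lies on a cycle of length at most four. For the inductive step, suppose $C$ is a cycle containing $F\setminus\{v\}$ and $v\notin V(C)$. Let $P=u_0u_1\dotsp u_k$ be a shortest $v$-to-$V(C)$ path in $G$, with $u_0=v$ and $u_k\in V(C)$. I would then peel off the vertices $u_{k-1},u_{k-2},\dotsc,u_0$ one at a time, at each stage applying (a refinement of) \cref{thm:cycleextension} to enlarge the current cycle so as to absorb the next vertex $u_i$ of $P$. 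The hypothesis of \cref{thm:cycleextension}---that the cycle misses some vertex of $G$---is automatic at every stage, since $G$ is infinite.

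The hardest part, and where I expect the real work to lie, is that \cref{thm:cycleextension} as stated does not let one choose \emph{which} one or two new vertices are added; it merely asserts that some extension exists. For the induction above I therefore need a refined version: given a cycle $C$ and a vertex $u\in N(V(C))\setminus V(C)$, there is a cycle $C'$ with $V(C)\cup\{u\}\subseteq V(C')$ and $\card{V(C')}\le\card{V(C)}+2$. I expect this refinement to fall out of the proof of \cref{thm:cycleextension} itself, because such extension arguments typically start by fixing any vertex outside the cycle and then construct the rerouting---the identity of that vertex should be a free parameter in the construction. A secondary, purely topological, issue arises in the compactness step: one must check that two consecutive but non-adjacent vertices in the limit cyclic order are separated in $G$ by a unique end through which the Hamilton curve can be routed, but this depends only on $G$ being connected and locally finite and transfers verbatim from the $L_0$-case of~\cite{kundgen17}.
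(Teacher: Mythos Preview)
Your reduction to the finitary statement ``every finite vertex set lies on a cycle'' followed by an appeal to Kündgen--Li--Thomassen is exactly the paper's route; the paper simply quotes their equivalence (\cref{thm:kundgen}) as a black box rather than redoing the diagonal argument, but that is cosmetic.

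The gap is in your cycle-covering step. The refinement you need---that for a \emph{prescribed} vertex $u\in N(V(C))\setminus V(C)$ one can extend $C$ so as to contain $u$---does not drop out of the proof of \cref{thm:cycleextension} in the way you expect. In \cref{thm:unionge2cycleextension:claim:N(wi),thm:unionge2cycleextension:claim:everysecond} every rerouting constructed does indeed pass through the fixed outside vertex~$v$, but \cref{thm:unionge2cycleextension:claim:GinK} needs more: it uses that any off-cycle neighbour $u$ of some $w_i^+$ is itself adjacent to every second vertex of $C_n$, and this is obtained by re-running \cref{thm:unionge2cycleextension:claim:N(wi),thm:unionge2cycleextension:claim:everysecond} with $u$ in the role of~$v$. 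Those claims are proved under the blanket hypothesis ``no extension of $C_n$ exists''; if one only assumes ``no extension through $v$ exists'', the re-run on $u$ may terminate by exhibiting an extension through $u$ that misses~$v$. The paper records precisely this in \cref{rem:extension} and \cref{obs:infinite}: what the argument actually delivers is an extension containing some vertex of $M_3(v)$, not necessarily $v$ itself.

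Accordingly, the paper does not try to absorb a specific vertex. It encloses the target set $S$ in a ball $M_r(a)$, takes a cycle through $a$ meeting the larger ball $M_{r+3}(a)$ in as many vertices as possible (this maximum exists because $M_{r+3}(a)$ is finite), and observes that if some $v\in M_r(a)\setminus V(C)$ were adjacent to $C$, then \cref{obs:infinite} would yield a cycle meeting $M_{r+3}(a)$ in strictly more vertices---contradicting maximality. Your inductive peeling along a shortest $v$--$C$ path does not obviously repair with only the $M_3$ guarantee, since the single new vertex gained at each step need not lie on that path at all; the maximality device sidesteps this entirely.
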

We believe that \cref{thm:infinite} can be strengthened to the following,
which would be a generalization of \cref{oldthm:claw-free-ge2-circle}:
\begin{conjecture}
\label{conj:circle}
Let $G$ be a connected, infinite, locally finite $L_1$-graph on at least three vertices
such that $\card{N(u)\cap N(v)}\ge2$ for each pair of vertices $u,v$ with $d(u,v)=2$.
Then $\fcomp{G}$ has a Hamilton circle.
\end{conjecture}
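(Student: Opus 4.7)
The plan is to attempt the conjecture by lifting the finite cycle-extension mechanism of \cref{thm:cycleextension} to an exhausting sequence of cycles, and then extracting a Hamilton circle as a topological limit in~$\fcomp{G}$. The finite skeleton is straightforward: enumerate $V(G)=\{v_1,v_2,\ldots\}$, start from any short cycle through~$v_1$ (every vertex lies on a cycle of length at most~$4$ under the hypotheses, using the common-neighbor condition), and iteratively apply \cref{thm:cycleextension} to obtain a nested sequence $C_{n_1}\subset C_{n_2}\subset\cdots$ whose vertex sets exhaust~$V(G)$. Since the exceptional family~$\mathcal K$ consists only of finite graphs, it never obstructs the construction.

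Orient each~$C_{n_j}$ and view it as a continuous embedding $\varphi_j\colon S^1\hookrightarrow\fcomp{G}$. Because $\fcomp{G}$ is compact and metrizable, an Arzel\`a--Ascoli-type argument should extract a subsequence converging uniformly to a closed curve $\varphi\colon S^1\to\fcomp{G}$ passing through every vertex and every end of~$G$. This already recovers \cref{thm:infinite}, so the real content of the conjecture lies in ensuring that $\varphi$ is injective at the ends, i.e., that every end is visited exactly once rather than possibly many times as in a general Hamilton curve.

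The main obstacle is precisely this injectivity at the ends. \Cref{thm:cycleextension} offers no control over which side of a finite vertex separator the freshly added vertices lie on, so a crude limit~$\varphi$ could oscillate between distinct ends inside a single infinite component of~$G-S$ arbitrarily often. A plausible route, following the strategy used for \cref{oldthm:claw-free-ge2-circle} in the claw-free setting, would be to prove a localized sharpening of \cref{thm:cycleextension}: given any finite separator~$S$ already met by the current cycle, one should be able to choose the next $1$- or $2$-vertex extension inside a prescribed end-component of $G-S$. Combined with the $1$-toughness of $L_1$-graphs outside~$\mathcal K$ (stated in the introduction) and careful bookkeeping over a sequence of finer and finer separators converging to each end, such a localization would force the limit to enter and leave each end exactly once. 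Producing this localized extension lemma, which apparently demands considerably more of the $L_1$-structure than the global version does, is the main difficulty and the reason the statement is presented as a conjecture rather than as a theorem.
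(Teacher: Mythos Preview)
The statement in question is a \emph{conjecture} in the paper, not a theorem: the paper offers no proof and explicitly says only that ``we believe that \cref{thm:infinite} can be strengthened'' to \cref{conj:circle}. There is therefore nothing to compare your attempt against.

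That said, your proposal is honest about this: you do not claim a proof, and you correctly locate the gap between \cref{thm:infinite} (Hamilton \emph{curve}) and \cref{conj:circle} (Hamilton \emph{circle}) in the injectivity at the ends. Your suggested route---a localized refinement of \cref{thm:cycleextension} that lets one steer the extension into a prescribed component of $G-S$ for finite separators~$S$, combined with $1$-toughness outside~$\mathcal K$---is a reasonable programme and is consistent with the paper's remark that \cref{conj:circle} would generalize \cref{oldthm:claw-free-ge2-circle}, whose proof in the claw-free case (Heuer) uses exactly this kind of end-respecting control. One technical caveat: an Arzel\`a--Ascoli argument on arbitrary parametrizations $\varphi_j$ will not work out of the box, since the family is not equicontinuous without further normalization; the standard approach in this area is instead a compactness/diagonalization on the edge sets of the cycles restricted to finite pieces (as in K\"uhn--Diestel or Heuer), which you would need in place of the naive limit. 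But as you yourself conclude, producing the localized extension lemma is the real missing ingredient, and nothing in the paper supplies it.
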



\medskip
We end by characterizing all bipartite $L_1$-graphs.

\begin{theorem}
\label{thm:bipartite}
Let $G$ be a connected, bipartite $L_1$-graph
with maximum degree greater than~$2$.
Then either $G$ is a complete bipartite graph~$K_{n,n}$,
or $G$ is obtained from~$K_{n,n}$
by removing a single vertex, edge, or perfect matching.
\end{theorem}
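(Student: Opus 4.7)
The plan is to reformulate the $L_1$ condition in the bipartite setting, then analyse the neighbourhood structure around a maximum-degree vertex. In a bipartite graph $G$ with parts $A$ and $B$, any pair $u,v$ at distance~$2$ lies in the same part, while any common neighbour $w$ lies in the other part; thus $N(u)\cup N(v)$ is disjoint from $N(w)$, and
\[|N(u)\cup N(v)\cup N(w)|=d(u)+d(v)-|N(u)\cap N(v)|+d(w),\]
so the $L_1$ inequality becomes $|N(u)\cap N(v)|\ge d(w)-1$. I fix a maximum-degree vertex~$w$, set $\Delta=d(w)\ge3$, and, without loss of generality, take $w\in B$ with $N(w)=\{u_1,\dots,u_\Delta\}\subseteq A$. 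Applying the reformulated condition to any pair $u_i,u_j$ (at distance~$2$ via~$w$) gives $|N(u_i)\cap N(u_j)|\ge\Delta-1$, so each $u_i$ has degree $\Delta-1$ or~$\Delta$.

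Set $T=\{i:d(u_i)=\Delta-1\}$. For $i\in T$ the bound $|N(u_i)\cap N(u_j)|\ge\Delta-1=|N(u_i)|$ forces $N(u_i)\subseteq N(u_j)$ for every~$j$, so all vertices indexed by~$T$ share one common neighbourhood $B_0=\{w,v_1,\dots,v_{\Delta-2}\}$ which is contained in each $N(u_j)$, and each $v_k\in B_0$ is then pinned down to $N(v_k)=N(w)$ by the degree bound. The argument splits into three cases: if $|T|=\Delta$ then every $u_j$ has neighbourhood $B_0$ and $G=K_{\Delta,\Delta-1}$, that is, $K_{n,n}$ minus a vertex; if $|T|=1$, say $T=\{1\}$, then each $u_j$ with $j\ge2$ has $N(u_j)=B_0\cup\{w'_j\}$ for an extra vertex~$w'_j$, and applying the reformulated condition to $w,w'_j$ (common neighbour~$u_j$) together with $w'_j\notin N(u_1)$ forces $N(w'_j)=\{u_2,\dots,u_\Delta\}$, after which the fact that each $u_l$ has only one extra neighbour collapses all the $w'_j$ into a single vertex~$w'$ and yields $G=K_{\Delta,\Delta}-u_1w'$; if $T=\emptyset$, I classify each $b\in B^*:=\bigcup_i N(u_i)$ by $N(b)\cap N(w)$, which by $L_1$ applied to $w,b$ has size $\ge\Delta-1$, and letting $B_i$ be the set of such $b$ missing~$u_i$, the identity $d(u_i)=|B^*|-|B_i|=\Delta$ forces $|B_1|=\dots=|B_\Delta|=m$ and $|B_0|=\Delta-m(\Delta-1)\ge1$, hence $m\in\{0,1\}$.

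The main obstacle is the subcase $T=\emptyset$, $m=1$, where $B^*=\{w,b_1,\dots,b_\Delta\}$. Here each pair $b_k,b_l$ lies at distance~$2$ but has only $\Delta-2$ common neighbours among the $u_i$, so the $L_1$ condition demands a further common neighbour of $b_k$ and $b_l$ in $A\setminus N(w)$; since every $b_k$ has room for at most one neighbour outside $N(w)$, the external neighbours of the $b_k$ must all coincide in a single vertex~$u^*$ satisfying $N(u^*)=\{b_1,\dots,b_\Delta\}$, and $G$ is $K_{\Delta+1,\Delta+1}$ minus the perfect matching $\{u_1b_1,\dots,u_\Delta b_\Delta,u^*w\}$; the subcase $m=0$ simply gives $G=K_{\Delta,\Delta}$. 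In every case I finish by observing that every identified vertex either already has degree~$\Delta$ or cannot accept a further neighbour without violating the $L_1$ condition at some new distance-$2$ pair, so connectedness forbids additional vertices and $G$ is exactly the claimed $K_{n,n}$ variant.
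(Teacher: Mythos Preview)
Your argument is sound in spirit and is essentially an explicit unfolding of the paper's sketch: your bipartite reformulation $|N(u)\cap N(v)|\ge d(w)-1$ immediately yields both of the paper's Observations (adjacent vertices differ in degree by at most~$1$, and in the $n$-regular case distance-$2$ vertices share at least $n-1$ neighbours), and your analysis around a maximum-degree vertex is exactly the ``construct vertex by vertex'' procedure the paper alludes to.

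There is, however, a genuine gap in your case analysis. You split according to $|T|\in\{\Delta,\,1,\,0\}$ but never address $2\le|T|\le\Delta-1$. These cases do not occur, but you must say why: if $|T|\ge2$ and some $u_j$ with $j\notin T$ exists, then $N(u_j)=B_0\cup\{w'_j\}$ for some $w'_j\notin B_0$, and applying your reformulated condition to the pair $w,w'_j$ (common neighbour $u_j$) gives $|N(w)\cap N(w'_j)|\ge\Delta-1$; yet $w'_j\notin N(u_i)=B_0$ for every $i\in T$, so $|N(w)\cap N(w'_j)|\le\Delta-|T|\le\Delta-2$, a contradiction since $\Delta\ge3$. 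This is the same mechanism you use for $|T|=1$, so the fix is short, but as written the trichotomy is unjustified.

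A smaller imprecision: in the $|T|=1$ case you write that the condition on $w,w'_j$ ``forces $N(w'_j)=\{u_2,\dots,u_\Delta\}$'', but it only forces $N(w)\cap N(w'_j)=\{u_2,\dots,u_\Delta\}$. Ruling out an extra neighbour $a\in A\setminus N(w)$ of $w'$ requires one more application of $L_1$ (to $a$ and $u_2$ via $w'$: their common neighbours lie in $N(u_2)=B_0\cup\{w'\}$, and every vertex of $B_0$ has neighbourhood exactly $N(w)$, so $|N(a)\cap N(u_2)|\le1<\Delta-1$). Your closing sentence gestures at this, but it should be made explicit where it is first needed.
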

Note that a connected bipartite $L_1$-graph with maximum degree at most~$2$ is
either an even cycle or a finite or infinite path.

\section{Proofs}
In this section we prove our results.

\begin{remark}
\label{prop:unionreformulation}
Let $uwv$ be a path in $G$ with $uv\notin E(G)$. Then the inequality
$d(u)+d(v)\ge\card{N(u)\cup N(v)\cup N(w)}-1$
is equivalent to
$\card{N(u)\cap N(v)}\ge\card[\big]{N(w)\setminus\bigl(N(u)\cup N(v)\bigr)}-1$.
\end{remark}

\begin{lemma}
\label{lem:ge22connected}
If $G$ is a connected graph with at least three vertices such that
$\card{N(u)\cap N(v)}\ge2$ for each pair of vertices $u,v$ with $d(u,v)=2$,
then $G$ is $2$-connected.
\end{lemma}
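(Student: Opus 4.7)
The plan is to argue by contradiction using the fact that a connected graph fails to be $2$-connected if and only if it either has fewer than three vertices or contains a cut vertex. Since $|V(G)|\ge 3$ by hypothesis, it suffices to rule out the existence of a cut vertex.

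Accordingly, I would suppose that some vertex $v$ of $G$ is a cut vertex, and let $A$ and $B$ be two distinct components of $G-v$. Because $G$ is connected, $v$ has at least one neighbor in each component, so I can pick $a\in A\cap N(v)$ and $b\in B\cap N(v)$. The vertices $a$ and $b$ lie in different components of $G-v$, so they are non-adjacent in $G$; on the other hand, $a$-$v$-$b$ is a path of length~$2$, so $d_G(a,b)=2$. The hypothesis on common neighborhoods then yields $\card{N(a)\cap N(b)}\ge 2$.

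The contradiction will come from the observation that any common neighbor $w$ of $a$ and $b$ produces a path $a$-$w$-$b$ from $a$ to $b$ in~$G$; since $v$ is a cut vertex separating $A$ from $B$, every such path must pass through $v$, forcing $w=v$. Hence $N(a)\cap N(b)\subseteq\{v\}$, contradicting $\card{N(a)\cap N(b)}\ge 2$.

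There is no real obstacle here: the argument is essentially a direct translation of the definition of a cut vertex into the language of the common-neighborhood hypothesis. The only point requiring even minor care is ensuring that $a$ and~$b$ truly realize distance exactly~$2$ (rather than being adjacent), which follows automatically from their lying in different components of $G-v$.
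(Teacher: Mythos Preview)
Your argument is correct and is exactly the standard one: a cut vertex would yield neighbors $a,b$ in distinct components with $d(a,b)=2$ but $N(a)\cap N(b)\subseteq\{v\}$, contradicting the hypothesis. The paper itself states this lemma without proof, treating it as routine, so there is nothing to compare against; your write-up supplies precisely the expected justification.
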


\begin{lemma}[see {\cite[thm.~5]{asratian96c}}%
\footnote{In~\cite{asratian96c}, the result in \cref{thm:1-tough-or-K}
is only stated for finite graphs,
but the same proof works for infinite, locally finite graphs as well.}]
\label{thm:1-tough-or-K}
If $G$ is a $2$-connected $L_1$-graph, then either $G$ is $1$-tough or $G\in\mathcal K$.
\end{lemma}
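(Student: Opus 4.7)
My plan is to argue by contradiction: assume $G$ is a $2$-connected $L_1$-graph that is not $1$-tough, and show that $G\in\mathcal K$. Since $G$ is not $1$-tough, some set $S\subseteq V(G)$ satisfies $c(G-S)\ge\card{S}+1$; I would choose such an $S$ so that the surplus $t:=c(G-S)-\card{S}\ge 1$ is maximum and, subject to this, $\card{S}$ is minimum. Write $p=\card{S}$, $m=c(G-S)=p+t$, and let $C_1,\dotsc,C_m$ be the components of $G-S$; $2$-connectivity forces $p\ge 2$. The goal is to exhibit a partition $V(G)=S\cup B$ with $\card{B}=p+1$, $B$ independent, and every vertex of $B$ adjacent to every vertex of $S$, which is precisely what it means for $G$ to satisfy $K_{p,p+1}\subseteq G\subseteq K_p\join\complement{K_{p+1}}$.

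First, I would extract combinatorial information from the extremality of $S$. For $s\in S$, let $k_s$ be the number of components of $G-S$ in which $s$ has a neighbor. Putting $s$ back into $G-(S-\{s\})$ merges $\{s\}$ with its $k_s$ adjacent components into a single component, so $c(G-(S-\{s\}))=m-k_s+1$; minimality of $\card{S}$ forces this to be at most $p-1$, and hence $k_s\ge m-p+2=t+2\ge 3$. Symmetrically, for $v\in C_j$ one has $c(G-(S\cup\{v\}))=(m-1)+c(C_j-v)$, and maximality of $t$ forces $c(C_j-v)\le 2$, so each component is internally well connected.

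Next I would apply the $L_1$-condition through \cref{prop:unionreformulation} to representatives of the components. Fix $s\in S$ and pick $u_l\in N(s)\cap C_{i_l}$ for each of the $k_s$ components that $s$ touches. For any $a\ne b$, the vertices $u_a,u_b$ lie in distinct components of $G-S$, are therefore non-adjacent, and share $s$ as a common neighbor, so $d(u_a,u_b)=2$. Moreover every $u_l$ is a neighbor of $s$ and lies in a component different from those of $u_a$ and $u_b$, so $u_l\notin N(u_a)\cup N(u_b)$, giving $\{u_1,\dotsc,u_{k_s}\}\subseteq N(s)\setminus\bigl(N(u_a)\cup N(u_b)\bigr)$. \cref{prop:unionreformulation} then yields $\card{N(u_a)\cap N(u_b)}\ge k_s-1$. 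Since common neighbors of $u_a$ and $u_b$ lie entirely in $S$, this forces $k_s\le p+1$, and combined with the previous bound gives $t+2\le k_s\le p+1$, so $t\le p-1$.

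The endgame is to show that every one of the inequalities above is tight: $t=1$, $m=p+1$, $k_s=p+1$ for every $s\in S$, and $\card{C_i}=1$ for every~$i$. To force $t=1$, I would argue that if $t\ge 2$ then for every pair of components sharing a common $S$-neighbor the inequality gives $\card{N(C_{i_a})\cap N(C_{i_b})\cap S}\ge t+1$, and a counting argument --- comparing the number of components each $s\in S$ can miss (at most $p+1-k_s\le p-1-t$) with the lower bound on pairwise overlaps --- produces a contradiction. Once $t=1$, the inequality becomes $\card{N(u_a)\cap N(u_b)}\ge p$, so $N(u_a)\cap N(u_b)=S$; iterating across the choices of $s,a,b$ shows that every vertex adjacent to some element of $S$ has all of $S$ in its neighborhood. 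To rule out $\card{C_i}\ge 2$ I would apply \cref{prop:unionreformulation} a second time, to two vertices of $C_i$ or a vertex of $C_i$ paired with a representative in another component: a surplus vertex inside $C_i$ enlarges $N(s)\setminus(N(u)\cup N(v))$, forcing strictly more than $p$ required common $S$-neighbors and hence a contradiction. The main obstacle I expect is this last step --- closing the case $\card{C_i}\ge 2$, in particular ruling out a ``hidden'' vertex of some $C_i$ that has no neighbor in $S$ and is therefore invisible to the distance-$2$ machinery of the $L_1$-condition. Handling such a vertex will likely require combining the internal-connectivity bound $c(C_j-v)\le 2$ from the first step with a careful choice of $L_1$-triple running along an internal edge of $C_j$.
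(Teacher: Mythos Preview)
The paper does not prove this lemma at all: it is quoted from \cite[Theorem~5]{asratian96c}, with only a footnote remarking that the same argument extends to infinite, locally finite graphs. So there is no in-paper proof to compare your proposal against.

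On the merits of your sketch: the overall strategy---take a witness $S$ to non-1-toughness, apply the $L_1$-inequality through \cref{prop:unionreformulation} to representatives of distinct components sharing an $S$-neighbor, and squeeze the structure down to $K_{p,p+1}\subseteq G\subseteq K_p\join\complement{K_{p+1}}$---is the natural one and is presumably close to what \cite{asratian96c} does. But there is a concrete slip in your extremality argument. With $S$ chosen to \emph{maximize} $t$ and then minimize $\card S$, minimality of $\card S$ only tells you that $S-\{s\}$ has surplus strictly less than $t$; it does \emph{not} give $c(G-(S-\{s\}))\le p-1$. From your extremality you only get $k_s\ge 3$, not $k_s\ge t+2$. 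The bound $k_s\ge t+2$ does follow if you instead take $S$ minimal subject to $c(G-S)>\card S$, but then you lose the companion bound $c(C_j-v)\le 2$ that you invoke at the end. You need to reconcile the two choices, or show that one of the two bounds is unnecessary.

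Beyond that, the ``endgame'' is still a plan rather than a proof: the promised counting argument forcing $t=1$ is not given, and---as you yourself flag---the case $\card{C_i}\ge 2$, especially the possibility of a vertex of $C_i$ with no neighbor in $S$, is the real crux and remains unresolved in your sketch.
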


\subsection{Proof of \texorpdfstring{\cref{thm:cycleextension}}{Theorem~1}}

Assume that there is no cycle of length $n+1$ or $n+2$ containing the vertices of~$C_n$.
Specify a cyclic orientation~$\dir C_n$ of~$C_n$ and pick a vertex $v\in V(G)\setminus V(C_n)$ such that $N(v)\cap V(C_n)\ne\emptyset$.
Set $W=N(v)\cap V(C_n)$ and $p=\card{W}$. Let $w_1,\dotsc,w_p$ be the vertices of $W,$ occurring on $\dir C_n$ in the order of their indices, and set $W^+=\set{w_1^+,\dotsc,w_p^+}$.
All indices are considered to be modulo $p$, so $w_{p+1}=w_1$.

\begin{remark}
\label{rem:extension}
Note that any extension of $C_n$ that occurs in this proof
contains either the vertex $v$ or a vertex of $M_3(v)$
(in \cref{thm:unionge2cycleextension:claim:N(wi),thm:unionge2cycleextension:claim:everysecond}
it will always be the case that $v$ is included).
This will be important in the proof of \cref{thm:infinite}.
\end{remark}

\begin{claim}
\label{thm:unionge2cycleextension:claim:N(wi)}
The set $W^+\cup\set v$ is independent,
$N(w_i^+)\cap N(v)=N(w_i^+)\cap W,$
$\card{N(w_i)\cap W^+}=\card{N(w_i^+)\cap W}$, and
$N(w_i)\setminus\bigl(N(w_i^+)\cup N(v)\cup\set v\bigr)\subseteq W^+$
for $i=1,\dotsc,p$.
\end{claim}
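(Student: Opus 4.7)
The plan is to derive all four subclaims from the standing assumption that no cycle of length $n+1$ or $n+2$ contains $V(C_n)$, supplemented by the $L_1$-condition in the form of \cref{prop:unionreformulation}. First I would handle parts 1 and 2 with direct cycle-extension arguments, then deduce part 3 from $L_1$ combined with a double-counting identity, and finally obtain part 4 by chasing equality back through the inequalities that gave part 3.

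For part 1, if $v\sim w_i^+$ then inserting $v$ between $w_i$ and $w_i^+$ on $\dir C_n$ would produce an $(n+1)$-cycle containing $V(C_n)$, contradicting the standing assumption; in particular $W\cap W^+=\emptyset$, so no two vertices of $W$ are consecutive on $\dir C_n$. If instead $w_i^+\sim w_j^+$ with $i\neq j$, then the sequence $v,w_j,w_j^-,\dotsc,w_i^+,w_j^+,w_j^{++},\dotsc,w_i,v$ (going backward along $\dir C_n$ from $w_j$ to $w_i^+$, then using the chord $w_i^+w_j^+$, then going forward from $w_j^+$ to $w_i$) is an $(n+1)$-cycle containing $V(C_n)$, again a contradiction. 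For part 2, $N(w_i^+)\cap W\subseteq N(w_i^+)\cap N(v)$ is immediate from $W\subseteq N(v)$, while any off-cycle vertex $u\in N(w_i^+)\cap N(v)$ would yield the $(n+2)$-cycle $v,u,w_i^+,w_i^{++},\dotsc,w_i,v$, a contradiction.

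For part 3, I would apply \cref{prop:unionreformulation} to the path $vw_iw_i^+$ (valid since $v\not\sim w_i^+$ by part 1) to obtain
\[\card{N(v)\cap N(w_i^+)}\ge\card{N(w_i)\setminus\bigl(N(v)\cup N(w_i^+)\bigr)}-1.\]
By part 2 the left side equals $\card{N(w_i^+)\cap W}$, and both $v$ and every $w_k^+\in N(w_i)\cap W^+$ lie in $N(w_i)\setminus(N(v)\cup N(w_i^+))$, because $v\notin N(v)$, $v\not\sim w_i^+$, $v\not\sim w_k^+$, and $w_k^+\not\sim w_i^+$ (all by part 1). Hence $\card{N(w_i^+)\cap W}\ge\card{N(w_i)\cap W^+}$ for every $i$. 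Summing over $i$ and observing that both $\sum_i\card{N(w_i^+)\cap W}$ and $\sum_i\card{N(w_i)\cap W^+}$ count the edges of $G$ with one endpoint in $W$ and one in $W^+$, the sums are equal, so each inequality must be an equality. This proves part 3, and part 4 now falls out: tightness forces $N(w_i)\setminus(N(v)\cup N(w_i^+))=(N(w_i)\cap W^+)\cup\set v$, and removing $\set v$ from both sides gives $N(w_i)\setminus(N(w_i^+)\cup N(v)\cup\set v)=N(w_i)\cap W^+\subseteq W^+$.

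The main obstacle I foresee is the argument for part 3: the $L_1$-condition alone only yields the one-sided inequality for each $i$, and the upgrade to equality requires recognising that both sides, summed over $i$, count the same set of edges. The cycle construction for the chord $w_i^+\sim w_j^+$ in part 1 is the other delicate step, since one must verify that the walk described is a simple cycle of length exactly $n+1$ — which relies on the already established fact that no two vertices of $W$ are consecutive on $\dir C_n$.
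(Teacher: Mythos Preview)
Your proposal is correct and follows essentially the same approach as the paper: the same cycle-extension contradictions for parts 1 and 2, the same application of the $L_1$-inequality via \cref{prop:unionreformulation} and the same double count of $e(W,W^+)$ for part 3, and the same equality-chasing for part 4. Your closing worry about the chord cycle being simple is slightly overcautious; once $v\not\sim w_i^+$ is established (so $W\cap W^+=\emptyset$), the walk you describe is automatically a simple $(n+1)$-cycle, and the paper does not comment on this further.
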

\begin{proof}
If there is an edge $vw_i^+\in E(G)$, then $G$ contains an $(n+1)$-cycle $w_ivw_i^+\dir C_nw_i$,
and if there is an edge $w_i^+w_j^+\in E(G)$, then $G$ contains an $(n+1)$-cycle $w_ivw_j\revdir C_nw_i^+w_j^+\dir C_nw_i$.
Thus
\begin{equation}
\label{thm:unionge2cycleextension:eq:independent}
W^+\cup\set v\text{ is an independent set.}
\end{equation}
Also, if $\bigl(N(w_i^+)\cap N(v)\bigr)\setminus V(C_n)\ne\emptyset$ for some $1\le i\le p$,
that is, if $w_i^+$ and $v$ have a common neighbor $u$ outside $C_n$,
then $G$ contains an $(n+2)$-cycle \smash{$w_ivuw_i^+\dir C_nw_i$}.
Thus $\bigl(N(w_i^+)\cap N(v)\bigr)\setminus V(C_n)=\emptyset$, which means that
\begin{equation}
\label{thm:unionge2cycleextension:eq:N(wi+)W}
N(w_i^+)\cap N(v)=N(w_i^+)\cap W.
\end{equation}

Now for each $i=1,\dotsc,p$, we have $d(v,w_i^+)=2$ and $w_i\in N(w_i^+)\cap N(v)$, so by \cref{prop:unionreformulation},
\begin{equation}
\label{thm:unionge2cycleextension:eq:transformedunionwi}
\card{N(w_i^+)\cap W}=\card{N(w_i^+)\cap N(v)}\ge\card[\big]{N(w_i)\setminus\bigl(N(w_i^+)\cup N(v)\bigr)}-1.
\end{equation}
Obviously,
\begin{equation}
\label{thm:unionge2cycleextension:eq:N(wi)-inclusion}
N(w_i)\cap W^+\subseteq N(w_i)\setminus\bigl(N(w_i^+)\cup N(v)\cup\set v\bigr).
\end{equation}
Thus $\card{N(w_i)\cap W^+}\le \card[\big]{N(w_i)\setminus\bigl(N(w_i^+)\cup N(v)\bigr)}-1$.
This and \eqref{thm:unionge2cycleextension:eq:transformedunionwi} together imply that
\begin{equation}
\label{thm:unionge2cycleextension:eq:WW+inequality}
\card{N(w_i)\cap W^+}\le\card{N(w_i^+)\cap W}.
\end{equation}
We will now count the number of edges between $W^+$ and $W$ in two different ways:
\begin{equation}
e(W^+,W)=\sum_{i=1}^p\card{N(w_i)\cap W^+}\le\sum_{i=1}^p\card{N(w_i^+)\cap W}=e(W^+,W).
\end{equation}
It follows for each $i=1,\dotsc,p$, that
\begin{equation}
\label{thm:unionge2cycleextension:eq:WW+equality}
\card{N(w_i)\cap W^+}=\card{N(w_i^+)\cap W}
\end{equation}
and that we have equality in \eqref{thm:unionge2cycleextension:eq:N(wi)-inclusion}, so
\begin{equation}
\label{thm:unionge2cycleextension:eq:N(wi)}
N(w_i)\setminus\bigl(N(w_i^+)\cup N(v)\cup\set v\bigr)=N(w_i)\cap W^+\subseteq W^+.
\qedhere
\end{equation}
\end{proof}

\begin{claim}
\label{thm:unionge2cycleextension:claim:everysecond}
$w_i^+=w_{i+1}^-$ for $i=1,\dotsc,p$,
that is, $n=2p$ and
$v$ is adjacent to every second vertex of $C_n$.
\end{claim}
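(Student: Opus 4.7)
I would argue by contradiction. Suppose $w_j^+\neq w_{j+1}^-$ for some index $j$, so the arc of $\dir C_n$ from $w_j$ to $w_{j+1}$ has at least two interior vertices. The aim is to construct a cycle of length $n+1$ containing $V(C_n)\cup\{v\}$, contradicting the opening assumption of the proof that no extension of $C_n$ of length $n+1$ or $n+2$ exists.

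The first step is to derive two new edges. Since $w_{j+1}^-$ lies strictly inside that arc, it lies in neither $W$ nor $W^+$; in particular $w_{j+1}^-\neq v$ and $w_{j+1}^-\notin N(v)$. Applying the identity \eqref{thm:unionge2cycleextension:eq:N(wi)} of \cref{thm:unionge2cycleextension:claim:N(wi)} to the cycle neighbor $w_{j+1}^-$ of $w_{j+1}$ then forces $w_{j+1}^-\in N(w_{j+1}^+)$, i.e., a chord $w_{j+1}^-w_{j+1}^+$. Next, applying the hypothesis $\card{N(u)\cap N(u')}\ge 2$ to the distance-$2$ pair $v,w_{j+1}^+$ (with common neighbor $w_{j+1}$) yields $\card{N(v)\cap N(w_{j+1}^+)}\ge 2$, and combining with \eqref{thm:unionge2cycleextension:eq:N(wi+)W} and \eqref{thm:unionge2cycleextension:eq:WW+equality} gives $\card{N(w_{j+1})\cap W^+}\ge 2$. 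Since $w_{j+1}^+\in W^+$ accounts for only one such neighbor, there exists an index $k\neq j+1$ with $w_{j+1}w_k^+\in E(G)$.

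The next step is to build the desired cycle $C'$ by two substitutions on $C_n$: first, bypass $w_{j+1}$ by replacing the sub-path $w_{j+1}^-,w_{j+1},w_{j+1}^+$ with the chord edge $w_{j+1}^-w_{j+1}^+$; then reinsert $w_{j+1}$ elsewhere by replacing the cycle edge $w_kw_k^+$ with the detour $w_k,v,w_{j+1},w_k^+$. All four edges of the detour exist: $vw_k$ and $vw_{j+1}$ because $w_k,w_{j+1}\in W\subseteq N(v)$, and the remaining two by the previous step. Because $k\neq j+1$ and $W\cap W^+=\emptyset$ (every arc has length at least~$2$, since $w_k^+\in W$ would contradict the independence of $W^+\cup\{v\}$ established in \cref{thm:unionge2cycleextension:claim:N(wi)}), the two substitutions act on disjoint edges and involve pairwise distinct vertices. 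Every vertex of $V(C_n)\cup\{v\}$ therefore ends up with degree exactly~$2$ in the modified graph, and a direct traversal starting from $w_k$ confirms that the result is a single simple cycle of length $n+1$ containing $V(C_n)\cup\{v\}$, yielding the desired contradiction.

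The main obstacle is the bookkeeping in this last step: verifying that the two substitutions truly combine into a single simple cycle of length $n+1$ rather than two shorter disjoint cycles, and that no vertex is reused. This reduces to checking the disjointness of $\{w_{j+1},w_{j+1}^-,w_{j+1}^+\}$ and $\{w_k,w_k^+,v\}$ and then tracing the resulting cyclic structure; these disjointness facts all follow from $k\neq j+1$, $W\cap W^+=\emptyset$, and $v\notin V(C_n)$.
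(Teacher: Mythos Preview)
Your argument is correct, and it is organised differently from the paper's proof. Both proofs begin the same way: from $w_j^+\neq w_{j+1}^-$ one deduces $w_{j+1}^-\notin W\cup W^+$ and hence, via \eqref{thm:unionge2cycleextension:eq:N(wi)}, the chord $w_{j+1}^-w_{j+1}^+$. From there the paper \emph{propagates}: it shows that this forces $w_{j+1}^+\neq w_{j+2}^-$, hence the chord $w_{j+2}^-w_{j+2}^+$, and so on around the cycle, obtaining \emph{all} chords $w_i^-w_i^+$; only then does it argue that any edge $w_1^+w_j$ with $j\neq1$ would yield the $(n{+}1)$-cycle $w_1vw_jw_1^+\dir C_nw_j^-w_j^+\dir C_nw_1$, so $N(w_1^+)\cap N(v)=\{w_1\}$, contradicting the $\card{N(u)\cap N(v)}\ge2$ hypothesis. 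You bypass the propagation entirely: using $\card{N(v)\cap N(w_{j+1}^+)}\ge2$ together with the equality $\card{N(w_{j+1})\cap W^+}=\card{N(w_{j+1}^+)\cap W}$ from \cref{thm:unionge2cycleextension:claim:N(wi)}, you locate an edge $w_{j+1}w_k^+$ with $k\neq j+1$ and build the $(n{+}1)$-cycle $w_kvw_{j+1}w_k^+\dir C_nw_{j+1}^-w_{j+1}^+\dir C_nw_k$ directly. This is the same cycle shape the paper uses, but you reach it after establishing only a single chord rather than all of them; the trade-off is that you must invoke the edge-counting equality \eqref{thm:unionge2cycleextension:eq:WW+equality}, which the paper's proof of this claim does not use. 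Your disjointness checks (that $\{w_k,w_k^+\}\cap\{w_{j+1}^-,w_{j+1},w_{j+1}^+\}=\emptyset$, relying on $W\cap W^+=\emptyset$, $w_{j+1}^-\notin W\cup W^+$, and $w_{j+1}^+\notin W$) are all sound, so the two substitutions indeed combine into a single $(n{+}1)$-cycle.
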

\begin{proof}
Suppose that $v$ is not adjacent to every second vertex of the cycle $C_n$. Then $w_i^+\ne w_{i+1}^-$ for some $i$.
Without loss of generality, assume that $w_1^+\ne w_2^-$, which means that $w_2^-\notin W^+$.
This and \eqref{thm:unionge2cycleextension:eq:N(wi)} for $i=2$ imply that $w_2^-\in N(w_2^+)$, because otherwise $w_2^-\in N(w_2)\setminus\bigl(N(w_2^+)\cup N(v)\cup\set v\bigr)\subseteq W^+$, a contradiction.
Therefore $w_2^-w_2^+\in E(G)$.
This in turn means that $w_2^+\ne w_3^-$, because otherwise there would be an $(n+1)$-cycle $w_2^-w_2^+w_2vw_3\dir C_nw_2^-$
(unless $p=1$, in which case recall that $w_{p+1}=w_1$ and skip this sentence).
Repetition of this argument shows that $w_i^+\ne w_{i+1}^-$ for $i=1,\dotsc,p$, and that
\begin{equation}
\label{thm:unionge2cycleextension:eq:edgeunder}
w_i^+w_i^-\in E(G)\text{ for each }i=1,\dotsc,p.
\end{equation}

Now it is easy to see that $w_1^+w_j\notin E(G)$ for each $j\ne 1$, as otherwise there would be an $(n+1)$-cycle $w_1vw_jw_1^+\dir C_nw_j^-w_j^+\dir C_nw_1$ containing the vertices of $C_n$.
This, together with \eqref{thm:unionge2cycleextension:eq:N(wi+)W}, implies that $N(w_1^+)\cap N(v)=\set{w_1}$. This contradicts the fact that $d(w_1^+,v)=2$.
Thus we can conclude that $w_i^+=w_{i+1}^-$ for each $i=1,\dotsc,p$, and that $n=2p$.
\end{proof}

\begin{claim}
\label{thm:unionge2cycleextension:claim:GinK}
$n=\card{V(G)}-1$ and $G\in\mathcal{K}$.
\end{claim}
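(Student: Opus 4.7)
My plan has three stages. First, I will show that no vertex outside~$V(C_n)$ can be adjacent to any vertex of~$W^+$. Second, this immediately forces $G-W$ to have at least $p+1$ components, so $G$ fails to be $1$-tough, and \cref{thm:1-tough-or-K} places $G$ in~$\mathcal K$. Third, the unique maximum independent set of a $\mathcal K$-graph, together with what we already know, pins down $V(G)=V(C_n)\cup\set v$.

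For the first stage, suppose some $u'\in V(G)\setminus V(C_n)$ were adjacent to a vertex $w_i^+\in W^+$. Then $N(u')\cap V(C_n)\ne\emptyset$, and the arguments of \cref{thm:unionge2cycleextension:claim:N(wi),thm:unionge2cycleextension:claim:everysecond} apply with~$u'$ in place of~$v$ to show that $N(u')\cap V(C_n)$ is one of the two ``every-second'' subsets of~$V(C_n)$; containing $w_i^+$, this subset equals~$W^+$. In particular $u'\ne v$ and $u'$ is adjacent to every vertex of~$W^+$. But then the sequence
\[
v,\ w_1,\ w_1^+,\ u',\ w_p^+,\ w_p,\ w_{p-1}^+,\ w_{p-1},\ \dotsc,\ w_3,\ w_2^+,\ w_2,\ v
\]
is a cycle of length $n+2$: every consecutive pair is joined by an edge of~$C_n$, an edge $vw_j$ with $w_j\in W\subseteq N(v)$, or an edge $u'w_j^+$ with $w_j^+\in W^+\subseteq N(u')$, and the vertices visited are exactly $v$, $u'$, and the $2p$ vertices of~$V(C_n)$. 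This contradicts our standing assumption, so no such~$u'$ exists.

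Having excluded external neighbors of~$W^+$, each $w_i^+$ has all of its neighbors in~$W$, so the $p$ vertices of~$W^+$ are isolated in $G-W$; this yields at least $p+1$ components (together with the one containing~$v$), so~$G$ cannot be $1$-tough. Since~$G$ is $2$-connected (\cref{lem:ge22connected}) and $p\ge2$, \cref{thm:1-tough-or-K} gives $G\in\mathcal K$. Write $V(G)=A\cup B$ with $\card A=q$, $\card B=q+1$, $B$ independent and every $A$-$B$ edge present. The unique maximum independent set~$B$ must contain $W^+\cup\set v$, and since each $w_i$ has the neighbor $w_i^+\in B$, also $W\subseteq A$. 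Any $a\in A\setminus W$ would lie outside $V(C_n)\cup\set v$ and be adjacent to all of $W^+\subseteq B$, which Stage~1 has ruled out. Therefore $q=p$, $A=W$, $B=W^+\cup\set v$, and $V(G)=V(C_n)\cup\set v$; thus $n=\card{V(G)}-1$ and $G\in\mathcal K$. The delicate part of the argument is the explicit construction and verification of the $(n+2)$-cycle in Stage~1; the rest is routine bookkeeping on the $\mathcal K$-bipartition.
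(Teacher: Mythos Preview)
Your argument follows the same three-stage outline as the paper: rule out external neighbours of $W^+$ via an $(n+2)$-cycle, deduce non-$1$-toughness and hence $G\in\mathcal K$, then conclude $n=\card{V(G)}-1$. The paper's $(n+2)$-cycle is simply $w_1vw_2w_1^+uw_2^+\dir C_nw_1$, shorter to write than yours but to the same effect; and for the final step the paper just observes that in any $\mathcal K$-graph a non-spanning cycle is trivially extendable, whereas you unpack the bipartition explicitly.

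There is one genuine soft spot in your Stage~3. The sentence ``the unique maximum independent set~$B$ must contain $W^+\cup\set v$'' does not follow from uniqueness alone: if $p+1\le q$, an independent set of size $p+1$ need not lie inside~$B$ (it could sit in~$A$, which may itself be edgeless). A clean repair uses what you have already established: since $N(w_i^+)\subseteq W$, every $w_i^+$ has degree at most~$p$, while every vertex of a $\mathcal K$-graph with parameter~$q$ has degree at least~$q$; hence $q\le p$. Combined with $2p+1\le\card{V(G)}=2q+1$, this forces $p=q$, and \emph{now} $\card{W^+\cup\set v}=q+1$ together with the uniqueness of~$B$ gives $W^+\cup\set v=B$. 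The remainder of your Stage~3 then goes through as written.
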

\begin{proof}
We have concluded that $n=2p$ and that $N(v)$ contains every second vertex of~$C_n$.
Note that $p\ge2$,
as otherwise $N(w_1^+)\cap N(v)=\set{w_1}$ by \cref{thm:unionge2cycleextension:claim:N(wi)},
contradicting the conditions of the theorem.
Suppose some vertex $w_i^+\in W^+$ has a neighbor $u$ outside $C_n$.
Since $v$ was picked arbitrarily in the set $V(G)\setminus V(C_n)$ such that $N(v)\cap V(C_n)\ne\emptyset$,
we can conclude that $u$ is adjacent to every second vertex of $C_n$ as well,
that is, $N(u)\cap V(C_n)=W^+$.
But then there is an $(n+2)$-cycle $w_1vw_2w_1^+uw_2^+\dir C_nw_1$ containing the vertices of $C_n$, a contradiction,
so no vertex outside $C_n$ is adjacent to any vertex in~$W^+$.
Thus $G$ is not 1-tough, so $G\in\mathcal K$ by \cref{thm:1-tough-or-K}.
Also, since $G\in\mathcal K$ it follows that
if $n<\card{V(G)}-1$ then there is
a cycle of length $n+1$ or $n+2$ in~$G$ containing the vertices of~$C_n$.
Thus $n=\card{V(G)}-1$.
\end{proof}

\subsection{Proof of \texorpdfstring{\cref{thm:pathextension-adjacent}}{Theorem~5}}

Assume that there is no $\pathto xy$-path with $n+1$ or $n+2$ vertices containing the vertices of $P_n$.
Pick a vertex $v\in V(G)\setminus V(P_n)$ such that $N(v)\cap V(P_n)\ne\emptyset$.
Since $x$ and $y$ have no neighbors in common,
it follows that $\card{N(v)\cap\set{x,y}}\le1$.
Without loss of generality we assume that $vy\notin E(G)$.
Let $\dir P_n$ be $P_n$ directed from $x$ to $y$.
Set $W=N(v)\cap V(P_n)$ and $p=\card{W}$. Let $w_1,\dotsc,w_p$ be the vertices of $W,$ occurring on $\dir P_n$ in the order of their indices, and set $W^+=\set{w_1^+,\dotsc,w_p^+}$.
The path $P_n$ together with the edge $xy$ of course forms a cycle, and for simplicity we define $z^+$ to be the successor of $z$ on this cycle, so $y^+=x$, etc.
Also, all indices are considered to be modulo $p$, so $w_{p+1}=w_1$.

\begin{claim}
\label{thm:unionge2pathextension:claim:N(wi)}
The set $W^+\cup\set v$ is independent,
$N(w_i^+)\cap N(v)=N(w_i^+)\cap W,$
$\card{N(w_i)\cap W^+}=\card{N(w_i^+)\cap W}$, and
$N(w_i)\setminus\bigl(N(w_i^+)\cup N(v)\cup\set v\bigr)\subseteq W^+$
for $i=1,\dotsc,p$.
\end{claim}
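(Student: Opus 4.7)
My plan is to mirror the cycle-case argument \cref{thm:unionge2cycleextension:claim:N(wi)} verbatim, with each proposed cycle extension replaced by a $\pathto xy$-path extension. First I would note that since $vy\notin E(G)$ and each $w_i\in N(v)$, we have $w_i\ne y$ for every $i$, so $w_i^+$ is the actual successor of $w_i$ along $\dir P_n$ and lies in $V(P_n)\setminus\set x$; the cyclic successor convention using the cycle $P_n+xy$ is therefore just a bookkeeping device.

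For the independence of $W^+\cup\set v$: an edge $vw_i^+\in E(G)$ would give the $(n+1)$-vertex $\pathto xy$-path $x\dir P_n w_iv w_i^+\dir P_n y$ containing $V(P_n)$, contradicting the standing assumption; this also forces $W\cap W^+=\emptyset$, so that the next construction is well-defined. An edge $w_i^+w_j^+\in E(G)$ with $i<j$ would give the $(n+1)$-vertex path $x\dir P_n w_iv w_j\revdir P_n w_i^+w_j^+\dir P_n y$. For the identity $N(w_i^+)\cap N(v)=N(w_i^+)\cap W$, a common neighbor $u\in(N(w_i^+)\cap N(v))\setminus V(P_n)$ would yield the $(n+2)$-vertex extension $x\dir P_n w_iv u w_i^+\dir P_n y$.

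The remaining two assertions then follow exactly as in the cycle case. Applying \cref{prop:unionreformulation} to the triple $v,w_i,w_i^+$ (a valid configuration since $d(v,w_i^+)=2$ with common neighbor $w_i$, by the independence just established) gives
\[\card{N(w_i^+)\cap W}=\card{N(w_i^+)\cap N(v)}\ge\card[\big]{N(w_i)\setminus\bigl(N(w_i^+)\cup N(v)\bigr)}-1.\]
The same independence yields $N(w_i)\cap W^+\subseteq N(w_i)\setminus\bigl(N(w_i^+)\cup N(v)\cup\set v\bigr)$, and since $v\in N(w_i)\setminus\bigl(N(w_i^+)\cup N(v)\bigr)$ this forces $\card{N(w_i)\cap W^+}\le\card{N(w_i^+)\cap W}$. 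Summing over $i$ counts $e(W^+,W)$ from both sides, so all these inequalities become equalities, which in turn forces equality in the containment above.

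The main thing to watch is that each proposed extension is genuinely a valid $\pathto xy$-path of the stated length; this is guaranteed by the WLOG assumption $vy\notin E(G)$, which forces $y\notin W$ so that $w_i^+$ never coincides with $x$ via cyclic wrap-around, and every extension simply inserts $v$ (and possibly $u$) into the interior of $P_n$ without disturbing the endpoints.
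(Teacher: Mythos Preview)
Your proposal is correct and is exactly the approach the paper takes: the paper's own proof is a one-line ``This follows using the same arguments as in the proof of \cref{thm:cycleextension}'', and you have simply spelled out that translation, correctly replacing each cycle extension by the corresponding $\pathto xy$-path extension. Your remarks about the WLOG assumption $vy\notin E(G)$ forcing $w_i\ne y$ (so that $w_i^+$ is a genuine successor on $P_n$ and the constructions never wrap around) are precisely the bookkeeping the paper leaves implicit.
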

\begin{proof}
This follows using the same arguments as in the proof of \cref{thm:cycleextension}.
\end{proof}

\begin{claim}
\label{thm:unionge2pathextension:claim:everysecond}
$w_1=x$, $w_p=y^-$, and $w_i^+=w_{i+1}^-$ for $i=1,\dotsc,p-1$,
that is, $n=2p$ and
$v$ is adjacent to every second vertex of $P_n$.
\end{claim}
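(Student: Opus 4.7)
The plan is to closely follow the proof of \cref{thm:unionge2cycleextension:claim:everysecond}, applied to the cycle $\hat C$ formed by $P_n$ together with the edge $xy$, which exists in $G$ since $x$ and $y$ are adjacent. Then $V(\hat C) = V(P_n)$ and $v$ has exactly the neighbors $W = \{w_1,\dotsc,w_p\}$ on $\hat C$; cyclic indexing on $\hat C$ extends $\dir P_n$ by setting the cyclic successor of $y$ to be $x$. Under this convention, the cycle-case conclusion becomes $w_i^+ = w_{i+1}^-$ for $i = 1,\dotsc,p$ modulo $p$, which is exactly the cyclic version of what we must prove.

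The key translation from cycle to path is that every $(n+1)$- or $(n+2)$-cycle constructed in the proof of \cref{thm:unionge2cycleextension:claim:everysecond} uses the edge $xy$: the only edges of $\hat C$ that get bypassed by such a cycle are of the form $w_iw_i^+$, and since $y \notin W$ (because $vy \notin E(G)$) and $n \ge 3$ (already forced, as the hypothesis $\card{N(w_1^+) \cap N(v)} \ge 2$ excludes $p=1$), none of these bypassed edges can be $xy$. Deleting $xy$ from such a cycle therefore yields an $\pathto xy$-path with the same vertex set, contradicting the standing assumption that no such path of length $n+1$ or $n+2$ exists. First I would verify this case-by-case for each cycle construction appearing in the cycle proof: the extension cycle $w_i v w_i^+ \dir{\hat C} w_i$, the $(n+2)$-extension $w_i v u w_i^+ \dir{\hat C} w_i$ for $u \notin V(P_n)$, the two-arc cycle $w_i v w_j \revdir{\hat C}\, w_i^+ w_j^+ \dir{\hat C} w_i$, and the auxiliary cycle $w_2^- w_2^+ w_2 v w_3 \dir{\hat C} w_2^-$ used in the iteration.

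With the translation in hand, I would rerun the cycle-proof argument step by step: assume for contradiction that $w_i^+ \ne w_{i+1}^-$ for some $i$ (cyclically on $\hat C$); use the containment from \cref{thm:unionge2pathextension:claim:N(wi)} to deduce $w_{i+1}^- w_{i+1}^+ \in E(G)$; iterate cyclically to obtain $w_k^- w_k^+ \in E(G)$ for every $k = 1,\dotsc,p$; and arrive at the contradiction $N(w_1^+) \cap N(v) = \{w_1\}$, which violates $\card{N(w_1^+) \cap N(v)} \ge 2$. Hence $w_i^+ = w_{i+1}^-$ cyclically on $\hat C$ for every $i$, so $v$ is adjacent to every second vertex of $\hat C$ and $n = 2p$. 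To extract the path-specific statement, I note that since $vy \notin E(G)$, the vertex $y$ occupies a non-$W$ slot in the alternating pattern, and both of its cyclic neighbors therefore lie in $W$: the predecessor $y^-$ on $\dir P_n$ forces $w_p = y^-$, and the cyclic successor of $y$, namely $x$, forces $w_1 = x$. The main obstacle is the bookkeeping required to confirm the cycle-to-path translation in each construction (i.e., that every bypassed edge is of the form $w_i w_i^+$ with $\{w_i, w_i^+\} \ne \{x, y\}$); once that is done, no new ideas are required beyond those in the cycle proof.
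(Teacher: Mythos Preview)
Your reduction to the cycle $\hat C=P_n+xy$ is natural, and it does work for \cref{thm:unionge2pathextension:claim:N(wi)}: each of the cycle-constructions used there bypasses only edges of the form $w_iw_i^+$, and since $y\notin W$ none of those can be the edge $xy$. The gap appears in the iteration step of \cref{thm:unionge2cycleextension:claim:everysecond}. The auxiliary cycle
\[
w_{i}^-\,w_{i}^+\,w_{i}\,v\,w_{i+1}\,\dir{\hat C}\,w_{i}^-
\]
(used to force $w_{i}^+\ne w_{i+1}^-$ once $w_{i}^-w_{i}^+\in E(G)$ has been established) bypasses the two edges $w_{i}^-w_{i}$ and $w_{i}^+w_{i+1}$, and neither is of the form $w_jw_j^+$ in general. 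In particular, when the cyclic iteration crosses the arc of $\hat C$ containing the edge $xy$ --- that is, when $w_i=y^-$ and $w_{i+1}=x$ (which is exactly the step from the original $w_p$ to the original $w_1$ in the case $vx\in E(G)$ and $vy^-\in E(G)$) --- the bypassed edge $w_i^+w_{i+1}$ \emph{is} the edge $yx$. The resulting $(n+1)$-cycle
\[
y^{--}\,y\,y^-\,v\,x\,\dir P_n\,y^{--}
\]
does not contain the edge $xy$, and you cannot break it into an $\pathto xy$-path of the same vertex set ($x$ and $y$ are not adjacent on this cycle). So the ``cycle extension $\Rightarrow$ path extension'' translation fails precisely at the wrap-around step, and the contradiction you aim for is not reached.

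This is why the paper does \emph{not} iterate cyclically. It takes the smallest $k\le p-1$ with $w_k^+\ne w_{k+1}^-$, iterates only forward to obtain $w_i^-w_i^+\in E(G)$ for $i=k+1,\dotsc,p$ (the corresponding path extensions $x\dir P_nw_i^-w_i^+w_ivw_{i+1}\dir P_ny$ never touch $xy$), and then replaces the cyclic closing argument by a separate edge-count on $W_1=\{w_1,\dotsc,w_k\}$ to force $N(v)\cap N(w_j^+)=\{w_j\}$ for some $j>k$, contradicting $\card{N(v)\cap N(w_j^+)}\ge2$. The conclusions $w_1=x$ and $w_p=y^-$ are then obtained by two short extra arguments that exploit the hypothesis $N(x)\cap N(y)=\emptyset$. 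Your plan would need a genuine substitute for the wrap-around step; the bookkeeping you anticipate will not close the case $w_1=x$, $w_p=y^-$, $w_k^+\ne w_{k+1}^-$ for some $k\in\{1,\dotsc,p-1\}$.
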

\begin{proof}
We will start by showing that $w_i^+=w_{i+1}^-$ for each $i=1,\dotsc,p-1$.
Assume on the contrary that $w_k^+\ne w_{k+1}^-$ for some $k\le p-1$,
and furthermore assume that $k$ is the first such index, i.e., either $k=1$ or $k\ge2$ and $w_i^+=w_{i+1}^-$ for every $i=1,\dotsc,k-1$.
Then $w_{k+1}^-\notin W^+$.
This and \eqref{thm:unionge2cycleextension:eq:N(wi)} for $i=k+1$ imply that $w_{k+1}^-\in N(w_{k+1}^+)$, because otherwise $w_{k+1}^-\in N(w_{k+1})\setminus\bigl(N(w_{k+1}^+)\cup N(v)\cup\set v\bigr)\subseteq W^+$, a contradiction.
Therefore $w_{k+1}^-w_{k+1}^+\in E(G)$.
This in turn means that $w_{k+1}^+\ne w_{k+2}^-$, because otherwise there would be an $\pathto xy$-path $x\dir P_nw_{k+1}^-w_{k+1}^+w_{k+1}vw_{k+2}\dir P_ny$
with $n+1$ vertices
(unless $k=p-1$, in which case skip this sentence).
Repetition of this argument shows that $w_i^+\ne w_{i+1}^-$ for each $i=k,\dotsc,p-1$, and that
\begin{equation}
\label{thm:unionge2pathextension:eq:edgeunder}
w_i^+w_i^-\in E(G)\text{ for each }i=k+1,\dotsc,p.
\end{equation}

Let $W_1=\set{w_1,\dotsc,w_k}$ and $W_1^+=\set{w_1^+,\dotsc,w_k^+}$.
It is easy to see that $w_i^+w_j\notin E(G)$ for each $j>k$ and each $i\ne j$, as otherwise there would be an $\pathto xy$-path $x\dir P_nw_ivw_jw_i^+\dir P_nw_j^-w_j^+\dir P_ny$ (if $i<j$) or $x\dir P_nw_j^-w_j^+\dir P_nw_ivw_jw_i^+\dir P_ny$ (if $i>j$) with $n+1$ vertices.
This means that $N(w_i^+)\cap W=N(w_i^+)\cap W_1$ for each $i=1,\dotsc,k$. This, together with
\eqref{thm:unionge2cycleextension:eq:WW+equality}, means that
\begin{equation}
\label{thm:unionge2pathextension:eq:NcapWinequality}
\card{N(w_i)\cap W_1^+}\le
\card{N(w_i)\cap W^+}=
\card{N(w_i^+)\cap W}=
\card{N(w_i^+)\cap W_1}
\end{equation}
for every $i=1,\dotsc,k$.
We will now count the edges between $W_1^+$ and $W_1$ in two different ways:
\begin{equation}
e(W_1^+,W_1)=\sum_{i=1}^k\card{N(w_i)\cap W_1^+}\le
\sum_{i=1}^k\card{N(w_i^+)\cap W_1}=e(W_1^+,W_1).
\end{equation}
This means that we have equality in \eqref{thm:unionge2pathextension:eq:NcapWinequality}, so for every $i=1,\dotsc,k$
\begin{equation}
\card{N(w_i)\cap W_1^+}=
\card{N(w_i)\cap W^+},
\end{equation}
which means that $w_iw_j^+\notin E(G)$ for all $i=1,\dotsc,k$ and $j=k+1,\dotsc,p$.
But then \eqref{thm:unionge2cycleextension:eq:N(wi+)W} implies that $N(v)\cap N(w_j^+)=\set{w_j}$ for every $j=k+1,\dotsc,p$.
This contradicts the assumptions of the theorem, because the fact that $d(v,w_j^+)=2$ implies that $\card{N(v)\cap N(w_j^+)}\ge2$.
Thus we can conclude that $w_i^+=w_{i+1}^-$ for each $i=1,\dotsc,p-1$.

Now we can use an argument similar to the one in the beginning of this proof to show that $w_1=x$:
If $w_1\ne x$ then $w_1^-\notin W^+$ (by assumption $vy\notin E(G)$, so no vertex on $x\dir P_nw_1$ is in $W^+$).
This means that $w_1^-\in N(w_1^+)$ by \eqref{thm:unionge2cycleextension:eq:N(wi)}, so $w_1^+w_1^-\in E(G)$.
Note also that $p\ge2$, since otherwise $N(v)\cap N(w_1^+)=\set{w_1}$, a contradiction as $d(v,w_1^+)=2$.
But now, since $w_1^+=w_2^-$, there is an $\pathto xy$-path $x\dir P_nw_1^-w_1^+w_1vw_2\dir P_ny$ with $n+1$ vertices.
This is a contradiction, so we can conclude that $w_1=x$.
Also, since $y$ and $x=w_1$ are adjacent and have no neighbors in common and $y\ne v$, it follows that $y\in N(w_1)\setminus\bigl(N(w_1^+)\cup N(v)\cup\set v\bigr)$.
Thus $y\in W^+$ by \eqref{thm:unionge2cycleextension:eq:N(wi)}, so $w_p=y^-$, and $n=2p$.
\end{proof}

\begin{claim}
\label{thm:unionge2pathextension:claim:GinK}
$n=\card{V(G)}-1$ and $G\in\mathcal{K}$.
\end{claim}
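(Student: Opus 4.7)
The plan is to mirror the argument used to prove \cref{thm:unionge2cycleextension:claim:GinK}, adapted to the path setting. First I would verify that $p\ge 2$: if $p=1$ then \cref{thm:unionge2pathextension:claim:N(wi)} gives $N(w_1^+)\cap N(v)=\set{w_1}$, contradicting the hypothesis $\card{N(v)\cap N(w_1^+)}\ge 2$ which must hold because $d(v,w_1^+)=2$ (with common neighbor $w_1$).

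Next I would show that no vertex $u\in V(G)\setminus V(P_n)$ is adjacent to any vertex of $W^+$. Suppose for contradiction that such a $u$ exists. Since $x$ and $y$ have no common neighbor, at most one of $ux,uy$ is an edge; reversing the orientation of $\dir P_n$ if necessary (so that $u$ plays the role that $v$ did, with respect to an endpoint to which it is non-adjacent), I would apply the arguments of \cref{thm:unionge2pathextension:claim:N(wi),thm:unionge2pathextension:claim:everysecond} to $u$ in place of $v$. This yields that $u$ is adjacent to every second vertex of $P_n$ starting at one endpoint, so $N(u)\cap V(P_n)\in\set{W,W^+}$. Since $W\cap W^+=\emptyset$ and $u$ has a neighbor in $W^+$, only $N(u)\cap V(P_n)=W^+$ can occur. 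But then
\[x,\;v,\;w_2,\;w_1^+,\;u,\;w_2^+,\;w_3,\;w_3^+,\;\dotsc,\;w_p,\;y\]
is an $\pathto{x}{y}$-path with $n+2$ vertices containing all of $V(P_n)$, contradicting the assumption made at the start of the proof.

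Consequently every $w_i^+\in W^+$ satisfies $N(w_i^+)\subseteq W$ (using also that $W^+$ is independent by \cref{thm:unionge2pathextension:claim:N(wi)}). Taking $S=W$, the $p$ vertices of $W^+$ are isolated in $G-S$, and the nonempty set $V(G)\setminus V(P_n)$ contributes at least one further component (it contains $v$, and no vertex outside $P_n$ is adjacent to $W^+$). Hence $c(G-S)\ge p+1>p=\card{S}$, so $G$ is not $1$-tough, and \cref{thm:1-tough-or-K} yields $G\in\mathcal{K}$. To conclude that $n=\card{V(G)}-1$: every graph in $\mathcal{K}$ has odd order $2p_0+1$ with $p_0\ge2$ and a rich $K_{p_0}\join\complement{K_{p_0+1}}$-like structure. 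If $n<\card{V(G)}-1$, then one can directly insert a vertex of $V(G)\setminus V(P_n)$ between two consecutive vertices of $P_n$ -- using only cross-edges between the two sides and, when needed, edges inside the $K_{p_0}$-side -- to produce an $\pathto{x}{y}$-path on $n+1$ vertices, contradicting our assumption.

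The main obstacle I expect is the case analysis for $u$: one must verify carefully that \cref{thm:unionge2pathextension:claim:N(wi),thm:unionge2pathextension:claim:everysecond} survive the reversal of orientation (in particular that the non-adjacency with the relabeled $y$-endpoint is ensured by $x$ and $y$ having no common neighbor), and that the two possible outcomes $N(u)\cap V(P_n)\in\set{W,W^+}$ are discriminated correctly via $N(u)\cap W^+\ne\emptyset$.
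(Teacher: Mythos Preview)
Your proposal is correct and follows essentially the same approach as the paper, whose proof simply reads ``This follows using the same arguments as in the proof of \cref{thm:cycleextension}.'' You have correctly spelled out how those arguments transfer to the path setting: establish $p\ge 2$, show via \cref{thm:unionge2pathextension:claim:N(wi),thm:unionge2pathextension:claim:everysecond} (applied to any outside vertex~$u$ with a neighbour in $W^+$, reversing orientation if needed) that $N(u)\cap V(P_n)=W^+$, exhibit the $(n+2)$-vertex path $x\,v\,w_2\,w_1^+\,u\,w_2^+\dir P_n y$, deduce that $W$ separates $W^+\cup\{v\}$ into $p+1$ components so $G$ is not $1$-tough, and conclude $G\in\mathcal{K}$ and $n=\card{V(G)}-1$. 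The orientation-reversal caveat you flag is exactly the extra care required beyond the cycle case, and your handling of it is sound.
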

\begin{proof}
This follows using the same arguments as in the proof of \cref{thm:cycleextension}.
\end{proof}

\subsection{Proof of \texorpdfstring{\cref{thm:pathextension-ge3}}{Theorem~6}}

Assume that there is no $\pathto xy$-path with $n+1$ or $n+2$ vertices containing the vertices of $P_n$.
Pick a vertex $v\in V(G)\setminus V(P_n)$ such that $N(v)\cap V(P_n)\ne\emptyset$.
Since $d(x,y)\ge3$, it follows that $\card{N(v)\cap\set{x,y}}\le1$.
Without loss of generality we assume that $vy\notin E(G)$.
Let $\dir P_n$ be $P_n$ directed from $x$ to~$y$.
Set $W=N(v)\cap V(P_n)$ and $p=\card{W}$. Let $w_1,\dotsc,w_p$ be the vertices of~$W$,
occurring on $\dir P_n$ in the order of their indices, and set $W^+=\set{w_1^+,\dotsc,w_p^+}$.

\begin{claim}
\label{thm:ge3:claim:N(wi)}
The set $W^+\cup\set v$ is independent,
$N(w_i^+)\cap N(v)=N(w_i^+)\cap W,$
$\card{N(w_i)\cap W^+}=\card{N(w_i^+)\cap W}$, and
$N(w_i)\setminus\bigl(N(w_i^+)\cup N(v)\cup\set v\bigr)\subseteq W^+$
for $i=1,\dotsc,p$.
\end{claim}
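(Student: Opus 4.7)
The plan is to follow the four-step scheme used for the analogous claims in the cycle extension proof (\cref{thm:unionge2cycleextension:claim:N(wi)}) and the adjacent-endpoints path extension proof (\cref{thm:unionge2pathextension:claim:N(wi)}), with the relevant short cycles or paths of those setups replaced by $\pathto{x}{y}$-paths of the current one. The assumption $vy\notin E(G)$ forces $y\notin W$, so every $w_i^+$ is a well-defined vertex on $\dir P_n$; the hypothesis $d(x,y)\ge3$ gives $\card{N(v)\cap\set{x,y}}\le1$, which is what justifies taking $vy\notin E(G)$ without loss of generality. For the independence of $W^+\cup\set v$, an edge $vw_i^+$ produces the $(n+1)$-vertex $\pathto{x}{y}$-path $x\dir P_nw_ivw_i^+\dir P_ny$, and an edge $w_i^+w_j^+$ with $i<j$ produces the $(n+1)$-vertex $\pathto{x}{y}$-path $x\dir P_nw_ivw_j\revdir P_nw_i^+w_j^+\dir P_ny$; both contradict the hypothesis on $P_n$ (and the arguments remain valid when $w_i^+=y$ or $w_j^+=y$, in which case the trailing subpaths are trivial).

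For the equality $N(w_i^+)\cap N(v)=N(w_i^+)\cap W$ the inclusion $\supseteq$ is immediate, and a common neighbor $u$ of $w_i^+$ and $v$ outside $V(P_n)$ would give the $(n+2)$-vertex $\pathto{x}{y}$-path $x\dir P_nw_ivuw_i^+\dir P_ny$. Independence gives $d(v,w_i^+)=2$ with $w_i\in N(v)\cap N(w_i^+)$, so \cref{prop:unionreformulation} applied at $(v,w_i,w_i^+)$ yields
\[\card{N(w_i^+)\cap W}=\card{N(w_i^+)\cap N(v)}\ge\card[\big]{N(w_i)\setminus\bigl(N(w_i^+)\cup N(v)\bigr)}-1.\]
Independence also gives $N(w_i)\cap W^+\subseteq N(w_i)\setminus\bigl(N(w_i^+)\cup N(v)\cup\set v\bigr)$, and since $v$ lies in $N(w_i)\setminus\bigl(N(w_i^+)\cup N(v)\bigr)$ but is excised on the left, this right-hand side has exactly one fewer element than $N(w_i)\setminus\bigl(N(w_i^+)\cup N(v)\bigr)$. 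Combining the two inequalities yields $\card{N(w_i)\cap W^+}\le\card{N(w_i^+)\cap W}$.

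Summing this last inequality over $i=1,\dotsc,p$ counts $e(W,W^+)=e(W^+,W)$ in two ways and forces equality in every summand as well as in the inclusion above, simultaneously giving $\card{N(w_i)\cap W^+}=\card{N(w_i^+)\cap W}$ and $N(w_i)\setminus\bigl(N(w_i^+)\cup N(v)\cup\set v\bigr)=N(w_i)\cap W^+\subseteq W^+$. There is no substantive new obstacle beyond the cycle case; the only delicate point is verifying that the short extensions produced in the independence step are genuine $\pathto{x}{y}$-paths even near the endpoints of $P_n$, and this is exactly what $vy\notin E(G)$ guarantees by placing every $w_i$ strictly before $y$ on $\dir P_n$.
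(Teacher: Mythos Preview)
Your proposal is correct and follows exactly the approach the paper intends: the paper's own proof of this claim simply refers back to \cref{thm:unionge2pathextension:claim:N(wi)} (and thence to \cref{thm:unionge2cycleextension:claim:N(wi)}), and you have carried out precisely that translation, replacing the extended cycles by the corresponding $\pathto{x}{y}$-paths and checking the endpoint issues that arise from $vy\notin E(G)$.
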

\begin{proof}
This is proved exactly as \cref{thm:unionge2pathextension:claim:N(wi)}
in the proof of \cref{thm:pathextension-adjacent}.
\end{proof}

\begin{claim}
\label{thm:ge3:claim:everysecond}
$w_1=x$ and $w_i^+=w_{i+1}^-$ for $i=1,\dotsc,p-1$,
that is, $v$ is adjacent to every second vertex of $x\dir P_nw_p$.
\end{claim}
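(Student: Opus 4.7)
The plan is to adapt \cref{thm:unionge2pathextension:claim:everysecond} from the proof of \cref{thm:pathextension-adjacent} to the present setting. The hypothesis $d(x,y)\ge3$ replaces the adjacency $xy\in E(G)$, so we lose the final step of that earlier argument, in which $y\in N(w_1)\setminus\bigl(N(w_1^+)\cup N(v)\cup\set v\bigr)$ forces $y\in W^+$ and hence $w_p=y^-$. Consequently, here we can only assert that $v$ meets every second vertex of the initial segment $x\dir P_nw_p$, not all of $P_n$.

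First I would establish $w_i^+=w_{i+1}^-$ for $i=1,\dotsc,p-1$. Assume for contradiction that this fails and let $k$ be the smallest failing index. One first rules out the sub-case in which $w_k$ and $w_{k+1}$ are adjacent on $\dir P_n$: then $x\dir P_nw_kvw_{k+1}\dir P_ny$ is an $\pathto xy$-path with $n+1$ vertices, a contradiction. Hence $w_{k+1}^-\notin W\cup W^+\cup\set v$ and $w_{k+1}^-\notin N(v)$, so \cref{thm:ge3:claim:N(wi)} applied at index $k+1$ forces $w_{k+1}^-w_{k+1}^+\in E(G)$. Iterating — each further coincidence $w_i^+=w_{i+1}^-$ would yield the $\pathto xy$-path $x\dir P_nw_i^-w_i^+w_ivw_{i+1}\dir P_ny$ of $n+1$ vertices — we obtain $w_i^+\ne w_{i+1}^-$ and $w_i^-w_i^+\in E(G)$ for $i=k+1,\dotsc,p$. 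Setting $W_1=\set{w_1,\dotsc,w_k}$ and $W_1^+=\set{w_1^+,\dotsc,w_k^+}$, any edge $w_i^+w_j$ with $j>k$ and $i\ne j$ yields an $\pathto xy$-path with $n+1$ vertices by routing through $v$ and using the detour $w_j^-w_j^+$. Hence $N(w_i^+)\cap W=N(w_i^+)\cap W_1$ for $i\le k$; a double-counting of edges between $W_1$ and $W_1^+$, combined with \cref{thm:ge3:claim:N(wi)}, forces $N(v)\cap N(w_j^+)=\set{w_j}$ for $j>k$, contradicting $\card{N(v)\cap N(w_j^+)}\ge2$ (which holds since $d(v,w_j^+)=2$).

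Finally I would prove $w_1=x$. Assume $w_1\ne x$. Since $w_1$ is the first element of $W$ on $\dir P_n$, the predecessor $w_1^-$ lies strictly before $w_1$ and cannot equal any $w_j^+$; hence $w_1^-\notin W\cup W^+\cup\set v$ and $w_1^-\notin N(v)$, so \cref{thm:ge3:claim:N(wi)} gives $w_1^+w_1^-\in E(G)$. From $\card{N(v)\cap N(w_1^+)}\ge2$ one checks $p\ge2$, and then, using $w_1^+=w_2^-$ from the previous step, the walk $x\dir P_nw_1^-w_1^+w_1vw_2\dir P_ny$ is an $\pathto xy$-path with $n+1$ vertices, a contradiction. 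The main obstacle is the bookkeeping in the iterative and double-counting steps, in particular handling the corner case $k=p-1$ (where the propagation is vacuous) and verifying that the constructed $(n+1)$-vertex paths are simple.
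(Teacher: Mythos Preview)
Your proposal is correct and follows essentially the same route as the paper: indeed the paper's own proof of this claim simply says ``This is proved exactly as \cref{thm:unionge2pathextension:claim:everysecond} in the proof of \cref{thm:pathextension-adjacent}, without the last two sentences,'' which is precisely what you outline. The only cosmetic difference is that your explicit sub-case ``$w_k$ and $w_{k+1}$ adjacent on $\dir P_n$'' is already excluded by the independence of $W^+\cup\set v$ (since $w_k^+=w_{k+1}$ would put $w_k^+\in N(v)$), so that step is redundant though harmless.
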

\begin{proof}
This is proved exactly as \cref{thm:unionge2pathextension:claim:everysecond}
in the proof of \cref{thm:pathextension-adjacent},
without the last two sentences.
\end{proof}

\begin{claim}
\label{thm:ge3:claim:t}
There exists a number~$t$ such that
$\card{N(w_i)\cap W^+}=\card{N(w_i^+)\cap W}=t$ for $i=1,\dotsc,p$.
\end{claim}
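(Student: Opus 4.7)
The plan is to first establish a monotone chain $t_1 \ge t_2 \ge \dotsb \ge t_p$ and then to rule out any strict decrease by a counting contradiction.

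The key observation, which I would prove first, is that every edge $w_k w_j^+ \in E(G)$ with $w_k\in W$ and $w_j^+\in W^+$ forces $t_j \ge t_k$. I would establish this by applying the $L_1$ condition, in the form of \cref{prop:unionreformulation}, to the triple $(v, w_j^+, w_k)$: here $d(v, w_j^+) = 2$ by the independence established in \cref{thm:ge3:claim:N(wi)}, and $w_k$ is a common neighbor by hypothesis. The left-hand side of the reformulated inequality equals $\card{N(w_j^+) \cap W} = t_j$; the right-hand side is at least $t_k + 1 - 1 = t_k$, since $N(w_k) \cap W^+$ (of size $t_k$) together with $v$ is contained in $N(w_k) \setminus \bigl(N(v) \cup N(w_j^+)\bigr)$ by the independence of $W^+ \cup \set v$ and the facts $v \in N(w_k)$, $v \notin N(v) \cup N(w_j^+)$. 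Applying this to each path edge $w_{i+1} w_i^+$ (for $i = 1, \dotsc, p - 1$) yields $t_i \ge t_{i+1}$, and hence the monotone chain.

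To obtain equality, I would suppose for contradiction that $t_i > t_{i+1}$ for some $i \in \set{1, \dotsc, p - 1}$, and partition $W = W^L \cup W^R$ with $W^L = \set{w_1, \dotsc, w_i}$, and similarly $W^+ = A^L \cup A^R$ with $A^L = \set{w_1^+, \dotsc, w_i^+}$. Any edge $w_k w_j^+$ with $k \le i < j$ would, by the key observation, give $t_j \ge t_k$, contradicting $t_j \le t_{i+1} < t_i \le t_k$; hence no such edge exists. Counting non-path edges in the bipartite subgraph between $W$ and $W^+$, the total incidence at $W^L$ equals $\sum_{k=1}^i t_k - (2i - 1)$ (since $w_1 = x$ has bipartite path-degree~$1$, while $w_2, \dotsc, w_i$ have path-degree~$2$), whereas the total incidence at $A^L$ equals $\sum_{j=1}^i t_j - 2i$ (every $w_j^+$ with $j \le i < p$ has path-degree~$2$). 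Since all non-path edges at $W^L$ land in $A^L$, but non-path edges at $A^L$ may reach $W^R$, subtracting the two counts forces the number of non-path edges between $A^L$ and $W^R$ to equal $-1$, a contradiction. Hence $t_1 = t_2 = \dotsb = t_p$.

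The main obstacle is the bookkeeping in the final step; the crucial point is the $+1$ asymmetry between the $W^L$ and $A^L$ incidence sums, arising because $w_1 = x$ is an endpoint of~$P_n$ (of path-degree~$1$ in the bipartite graph) while its counterpart $w_p^+$, the other path-degree-$1$ vertex, lies outside $A^L$ for $i < p$. This single-unit boundary imbalance is what turns the assumed strict decrease into an impossibility.
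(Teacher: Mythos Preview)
Your argument is correct and is essentially the paper's proof reorganized as a contradiction. Both rest on the same observation that an edge $w_kw_j^+$ forces $t_j\ge t_k$ (giving the chain $t_1\ge\dotsb\ge t_p$ via the path edges $w_{i+1}w_i^+$) and on the same edge count across a split of $\set{1,\dotsc,p}$; the paper counts on the right half to exhibit, for each $k$, a crossing edge $w_j^+w_\ell$ with $\ell\le k<j$ and conclude $t_{k+1}\ge t_k$ directly, whereas you run the contrapositive on the left half (your detour through ``non-path'' edges is harmless but unnecessary, since counting all edges already gives $e(A^L,W^R)=0$ in spite of the path edge $w_i^+w_{i+1}$).
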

\begin{proof}
First, for $i=1,\dotsc,p-1$
\begin{equation}
\begin{aligned}
\label{eq:decreasing-t}
\card{N(w_i^+)\cap W}
&=\card{N(w_i^+)\cap N(v)}\\
&\ge\card[\big]{N(w_{i+1})\setminus\bigl(N(w_i^+)\cup N(v)\bigr)}-1\\
&\ge\card{N(w_{i+1})\cap(W^+\cup\set v)}-1\\
&=\card{N(w_{i+1}^+)\cap W}.
\end{aligned}
\end{equation}
Now for any $k\in\set{1,\dotsc,p-1}$, $w_k^+w_{k+1}\in E(G)$, so
\begin{equation}
\begin{aligned}
1+e\bigl(\set{w_{k+1}^+,\dotsc,w_p^+},\set{w_{k+1},\dotsc,w_p}\bigr)
&\le e\bigl(W^+,\set{w_{k+1},\dotsc,w_p}\bigr)\\
&=\sum_{i=k+1}^p\card{N(w_i)\cap W^+}\\
&=\sum_{i=k+1}^p\card{N(w_i^+)\cap W}\\
&=e\bigl(\set{w_{k+1}^+,\dotsc,w_p^+},W\bigr).
\end{aligned}
\end{equation}
Thus $G$ must contain some edge $w_j^+w_i$ with $i\le k<j$.
Now, by using \eqref{eq:decreasing-t} iteratively,
\begin{equation}
\begin{aligned}
\card{N(w_{k+1}^+)\cap W}
&\ge\card{N(w_j^+)\cap W}\\
&=\card{N(w_j^+)\cap N(v)}\\
&\ge\card[\big]{N(w_i)\setminus\bigl(N(w_j^+)\cup N(v)\bigr)}-1\\
&\ge\card{N(w_i)\cap W^+}\\
&=\card{N(w_i^+)\cap W}\\
&\ge\card{N(w_k^+)\cap W}.
\end{aligned}
\end{equation}
We can thus conclude that
$\card{N(w_k^+)\cap W}=\card{N(w_{k+1}^+)\cap W}$ for $k=1,\dotsc,p-1$.
The rest of the claim now follows from \cref{thm:ge3:claim:N(wi)}.
\end{proof}

\begin{claim}
\label{thm:ge3:claim:N(wi+)-on-Pn}
$N(w_i^+)\subseteq V(P_n)$ for $i=1,\dotsc,p-1$.
\end{claim}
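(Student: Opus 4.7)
My plan is to argue by contradiction. Suppose some $u \in V(G) \setminus V(P_n)$ is a neighbor of $w_i^+$ for some $i \in \{1, \dotsc, p-1\}$. By \cref{thm:ge3:claim:N(wi)} the set $W^+ \cup \{v\}$ is independent, so $u \ne v$; and since $d(x, y) \ge 3$, $u$ is adjacent to at most one of $x$ and $y$. Using \cref{thm:ge3:claim:everysecond}, which places $w_j$ at position $2(j-1)$ along $\dir P_n$ counted from $x$ and $w_j^+$ at position $2j-1$, the vertex $w_i^+$ sits at the odd position $2i-1$.

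The key observation is that the reasoning in \cref{thm:ge3:claim:N(wi),thm:ge3:claim:everysecond} applies to any vertex in $V(G) \setminus V(P_n)$ with a neighbor on $P_n$, provided we orient $\dir P_n$ so that its terminal endpoint is a non-neighbor. Hence, if $uy \notin E(G)$, reapplying this reasoning to $u$ forces $N(u) \cap V(P_n)$ to consist of $x$ and every other subsequent vertex of $\dir P_n$, so its elements lie at even positions -- contradicting $uw_i^+ \in E(G)$. If instead $uy \in E(G)$, then $ux \notin E(G)$, and the symmetric argument with $\dir P_n$ reversed puts $N(u) \cap V(P_n)$ at positions $n{-}1, n{-}3, n{-}5, \dotsc$ from $x$. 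For $2i-1$ to belong to this set, $n$ must be even; and in that case the arithmetic-progression structure additionally yields $u w_{i+1}^+ \in E(G)$ (note $i \le p-1 \le n/2 - 1$ forces $w_{i+1}^+$ to exist and to sit at position $2i+1 \le n-1$).

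In this remaining sub-case I would assemble the $\pathto{x}{y}$-path
\[
P' = x \dir P_n\, w_i\, v\, w_{i+1}\, w_i^+\, u\, w_{i+1}^+ \dir P_n\, y,
\]
whose non-$P_n$ edges are $w_i v$ and $v w_{i+1}$ (from $v \in N(w_i) \cap N(w_{i+1})$) together with $w_i^+ u$ and $u w_{i+1}^+$ (established above); the reverse edge $w_{i+1} w_i^+$ traversed in the middle lies on $P_n$, since $w_i^+ = w_{i+1}^-$. As $u \ne v$ both lie outside $V(P_n)$, the vertex set of $P'$ is $V(P_n) \cup \{v, u\}$, giving a $\pathto{x}{y}$-path on $n+2$ vertices containing all of $V(P_n)$ -- contradicting the standing assumption that no such extension of $P_n$ exists.

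The main obstacle is the case $uy \in E(G)$: it is not enough to rerun the every-other-vertex structure for $u$; one must also extract from the resulting parity both that $n$ is even and the specific edge $u w_{i+1}^+$, since this is exactly what lets $u$ and $v$ be interleaved into $P_n$ within a single $\pathto{x}{y}$-path.
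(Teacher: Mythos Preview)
Your argument is correct and follows the same approach as the paper: apply \cref{thm:ge3:claim:N(wi),thm:ge3:claim:everysecond} to $u$, rule out $ux\in E(G)$ by the parity of the position of $w_i^+$, deduce that $u$ is adjacent to $w_{i+1}^+$, and then build the same $\pathto{x}{y}$-path $x\dir P_n w_i v w_{i+1} w_i^+ u w_{i+1}^+ \dir P_n y$ on $n+2$ vertices. Your version is simply more explicit about the case split on $uy$ and the parity bookkeeping, but the idea and the final construction are identical.
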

\begin{proof}
If $w_i^+u\in E(G)$ for some $i=1,\dotsc,p-1$ and some $u\notin V(P_n)$,
then repeating \cref{thm:ge3:claim:N(wi),thm:ge3:claim:everysecond} with $u$ instead of $v$,
we get that $u$ is adjacent to every second vertex between $w_i^+$ and either $x$ or $y$.
But it is then impossible that $ux\in E(G)$, since $x\dir P_nw_i$ has an odd number of vertices,
which means that $u$ is adjacent to $y$ and, in particular, $w_{i+1}^+$.
But then there is an $\pathto xy$-path $x\dir P_nw_ivw_{i+1}w_i^+uw_{i+1}^+\dir P_ny$ with
$n+2$ vertices, a contradiction.
Thus $N(w_i^+)\subseteq V(P_n)$ for $i=1,\dotsc,p-1$.
\end{proof}

\begin{claim}
\label{thm:ge3:claim:WW+-complete}
$N(w_i^+)=W$ for $i=1,\dotsc,p-1$
and $W\subseteq N(w_p^+)$.
\end{claim}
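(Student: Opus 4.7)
The two assertions together amount to $t=p$ — which by Claim~\ref{thm:ge3:claim:t} yields $W\subseteq N(w_j^+)$ for every $j$ — together with $N(w_i^+)\cap R=\emptyset$ for $i<p$, where $R:=V(P_n)\setminus(W\cup W^+)$; Claims~\ref{thm:ge3:claim:N(wi)} and~\ref{thm:ge3:claim:N(wi+)-on-Pn} already give $N(w_i^+)\subseteq V(P_n)\setminus(W^+\cup\set v)=W\cup R$ for such $i$.

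\textbf{Step 1 ($t=p$).} I would suppose $t<p$ and derive a contradiction from the $L_1$-inequality. Applying Remark~\ref{prop:unionreformulation} to the triple $(v,w_p^+,w_p)$ gives
\[
t=\card{N(v)\cap N(w_p^+)}\ge\card[\big]{N(w_p)\setminus\bigl(N(v)\cup N(w_p^+)\bigr)}-1.
\]
The $t$ elements of $N(w_p)\cap W^+$ and the vertex $v$ itself already lie in the set on the right (each being independent of both $v$ and $w_p^+$), making the bound tight. To force another vertex into the difference I invoke the distance-2 hypothesis $\card{N(w_p)\cap N(w_p^{++})}\ge2$ in the generic case $w_p^+\ne y$: there must be a common neighbor of $w_p$ and $w_p^{++}$ besides $w_p^+$. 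A short case analysis on this extra neighbor is the substance of the proof — any internal chord (such as $w_p^-w_p^{++}$) lets me construct a concrete $\pathto xy$-extension of $P_n$ by inserting $v$ near $w_{p-1}$ and rerouting through the chord; any edge landing outside $N(v)\cup N(w_p^+)$ (for instance $w_py$ or $w_pw_p^{+++}$) enlarges the right-hand side and breaks the inequality; and any common neighbor outside $V(P_n)$ re-triggers Claims~\ref{thm:ge3:claim:N(wi)} and~\ref{thm:ge3:claim:everysecond} for that vertex, reducing to the two previous outcomes. The boundary case $w_p^+=y$ (so $R=\emptyset$) is handled in the same spirit using a distance-2 pair near the $y$-end.

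\textbf{Step 2 ($N(w_i^+)\cap R=\emptyset$ for $i<p$).} Given $t=p$, the bipartite subgraph between $W$ and $W^+$ is a complete $K_{p,p}$. If $w_i^+u\in E(G)$ with $u\in R$, I build an $\pathto xy$-path of length at least $n+1$ as follows: begin $x\dir P_nw_i\,v\,w_{i+1}w_i^+u$ (using $w_i^+=w_{i+1}^-$), traverse the segment of $R$ from $u$ back to $w_p^+$ along $P_n$, then zig-zag through the still-unvisited vertices of $W\cup W^+$ via the $K_{p,p}$-chords, and finally cover the remaining tail of $R$ starting from the path edge $w_p^+w_p^{++}$ to reach $y$. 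The resulting path contains $V(P_n)\cup\set v$ and has at least $n+1$ vertices, contradicting the standing assumption.

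\textbf{Main obstacle.} Step~1's case analysis is the delicate heart of the argument: each potential realisation of the extra common neighbor forced at $(w_p,w_p^{++})$ has to be ruled out, either by a strict violation of $L_1$ or by an explicit $\pathto xy$-extension. Once $t=p$ is secured, Step~2 is essentially mechanical thanks to $K_{p,p}$, though the tail construction requires some attention to avoid the usual parity obstruction for Hamilton paths in bipartite graphs.
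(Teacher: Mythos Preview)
Your outline differs substantially from the paper's argument, and both steps contain real gaps.

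\textbf{Step~2 does not work as written.} Once you are at the end of your zig-zag through the remaining vertices of $W\cup W^+$, you are sitting at some $w_j^+$ with $j<p$; all of $W$ has been visited, and the only neighbours of $w_j^+$ you know are in $W$. There is no edge taking you into $\{u^+,\dotsc,y\}$. (Your phrase ``starting from the path edge $w_p^+w_p^{++}$'' cannot help: both endpoints are already on the path you have built.) More structurally, the only known edges from $R$ to the rest are $w_p^+w_p^{++}$ and $w_i^+u$; an $\pathto xy$-path starting outside $R$ and ending at $y\in R$ crosses into $R$ an odd number of times, hence exactly once with the available edges, so $R$ must be covered by a single segment from the entry point to $y$. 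That is only possible when the entry point is $w_p^{++}$; for general $u\in R$ your construction collapses, and nothing in your argument forces $u=w_p^{++}$.

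\textbf{Step~1 is only a sketch.} The distance-2 hypothesis at $(w_p,w_p^{++})$ is unavailable if $w_pw_p^{++}\in E(G)$, and nothing you have said excludes that edge (it is consistent with \cref{thm:ge3:claim:N(wi)} since $w_p^{++}\in N(w_p^+)$). Even when the hypothesis applies, the promised ``short case analysis'' on the extra common neighbour $z$ is not carried out: if $z=w_j\in W$ you get $w_j^+w_p^{++}\in E(G)$ via \cref{thm:ge3:claim:N(wi)}, which is exactly the situation Step~2 was meant to dispatch, and if $z\notin V(P_n)$ your appeal to \cref{thm:ge3:claim:everysecond} for $z$ is not obviously conclusive either. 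The boundary case $w_p^+=y$ is dismissed without argument.

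\textbf{What the paper does instead.} The paper avoids all of this by a swapping trick: since every contradiction in \cref{thm:ge3:claim:N(wi),thm:ge3:claim:everysecond,thm:ge3:claim:t} arose from an extension that contained $v$, those claims remain valid for the modified path $P_n'=xvw_2\dir P_ny$ with $x^+$ playing the role of the outside vertex (and with the same value of $t$). If $t<p$, \cref{thm:ge3:claim:everysecond} for $(P_n',x^+)$ forces $N(x^+)\cap V(P_n')=\{w_1,\dotsc,w_t\}$, and then \cref{thm:ge3:claim:t} for $(P_n',x^+)$ gives $w_t^+$ at least $t+1$ neighbours in $W$, contradicting \cref{thm:ge3:claim:t} for $(P_n,v)$. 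For the second assertion one swaps $w_i^+$ out instead, using $P_n''=x\dir P_nw_ivw_{i+1}\dir P_ny$; \cref{thm:ge3:claim:everysecond} for $(P_n'',w_i^+)$ then forces $w_p^{++}\in N(w_i^+)$, whence $\card{N(w_p^+)\cap W''}\ge p+1$, again violating \cref{thm:ge3:claim:t}. No explicit path extension and no case analysis are needed.
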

\begin{proof}
Note that when proving \cref{thm:ge3:claim:N(wi),thm:ge3:claim:everysecond,thm:ge3:claim:t},
every time we reached a contradiction by constructing a longer $\pathto xy$-path,
the new path contained the vertex~$v$.
Also, note that $p\ge2$,
as otherwise $N(w_1^+)\cap N(v)=\set{w_1}$ by \cref{thm:ge3:claim:N(wi)},
contradicting the conditions of the theorem.
Now consider the path $P_n'=xvw_2\dir P_ny$.
Then \cref{thm:ge3:claim:N(wi),thm:ge3:claim:everysecond,thm:ge3:claim:t}
are valid for $P_n'$ with $x^+$ instead of $v$ as the outside vertex,
since otherwise we could construct an $\pathto xy$-path
containing all vertices of $V(P'_n)\cup\set{x^+}=V(P_n)\cup\set{v}$.
Note also that $t$ from \cref{thm:ge3:claim:t} has the property $t=\card{N(v)\cap N(x^+)}$,
so $t$ has the same value for $P_n$ and $v$ as for $P_n'$ and $x^+$.

We shall now prove that $t=p$.
Assume on the contrary that $t<p$ and let $W'=N(x^+)\cap V(P_n')$.
Since $x^+$ is adjacent to $t$~vertices in $W$,
it is easy to see that \cref{thm:ge3:claim:everysecond} for $P_n'$ and~$x^+$ implies that
$W'=\set{w_1,\dotsc,w_t}$.
It follows from \cref{thm:ge3:claim:t} for $P_n'$ and~$x^+$ that $w_t^+$ is adjacent to
all vertices in $W'$.
But then $\set{w_1,\dotsc,w_{t+1}}\subseteq N(w_t^+)\cap W$,
so $\card{N(w_t^+)\cap W}\ge t+1$,
contradicting \cref{thm:ge3:claim:t} for $P_n$ and~$v$.
We can conclude that $t=p$ and that $W\subseteq N(w_i^+)$ for $i=1,\dotsc,p$.

Now assume that $N(w_i^+)\ne W$ for some $i\in\set{1,\dotsc,p-1}$.
Then \cref{thm:ge3:claim:N(wi+)-on-Pn} implies that $w_i^+$ has a neighbor on $w_p^{++}P_ny$,
since $W^+$ is independent.
Now consider the path $P_n''=x\dir P_nw_ivw_{i+1}\dir P_ny$.
As above, \cref{thm:ge3:claim:N(wi),thm:ge3:claim:everysecond,thm:ge3:claim:t}
are valid for $P_n''$ with $w_i^+$ instead of $v$ as the outside vertex
and $t$ has the same value for $P_n$ and $v$ as for $P_n''$ and $w_i^+$.
Thus $w_p^{++}\in W''=N(w_i^+)\cap V(P_n'')$
by \cref{thm:ge3:claim:everysecond} for $P_n''$ and $w_i^+$.
This means that $\card{N(w_p^+)\cap W''}\ge\card{W\cup\set{w_p^{++}}}=t+1$, a contradiction.
We can conclude that $N(w_i^+)=W$ for $i=1,\dotsc,p-1$.
\end{proof}

We now know that $xw_p^+\in E(G)$, which means that $d(x,w_p^{++})\le2$.
This will be used to get our final contradiction.
If $xw_p^{++}\in E(G)$ then \cref{thm:ge3:claim:N(wi)} implies that $x^+w_p^{++}\in E(G)$,
contradicting \cref{thm:ge3:claim:WW+-complete}.
Thus $d(x,w_p^{++})=2$,
which means that
\begin{equation}
\label{eq:x-wp++-two}
\card{N(x)\cap N(w_p^{++})}\ge2.
\end{equation}
It follows from \cref{thm:ge3:claim:N(wi)} that
$N(x)\subseteq W^+\cup N(x^+)\cup N(v)\cup\set v$.
\Cref{thm:ge3:claim:WW+-complete} shows that $N(x^+)=W$,
and we shall see that $N(v)\cap N(x)\subset W$ as well.
Assume on the contrary that there is a vertex~$u\in N(x)\cap N(v)\setminus V(P_n)$.
Then $u\notin N(w_1^+)\cup N(w_2^+)$ by \cref{thm:ge3:claim:N(wi)}.
Thus
\begin{align}
p=\card{N(w_1^+)\cap N(w_2^+)}
&\ge\card[\big]{N(x)\setminus\bigl(N(w_1^+)\cup N(w_2^+)\bigr)}-1\notag\\
&\ge\card{W^+\cup\set{v,u}}-1\\
&=p+1,\notag
\end{align}
a contradiction.
We can conclude that $N(x)\subseteq W\cup W^+\cup\set v$.
\Cref{thm:ge3:claim:WW+-complete} implies that $N(w_p^{++})\cap W^+\subseteq\set{w_p^+}$,
and together with \cref{thm:ge3:claim:N(wi)} it implies that
$N(w_p^{++})\cap W\subseteq\set{w_p}$
since $w_p^{++}\notin W^+\cup N(v)\cup\set v$.
\Cref{eq:x-wp++-two} now implies that $N(x)\cap N(w_p^{++})=\set{w_p,w_p^+}$.
But then
\begin{align}
p=\card{N(x^+)\cap N(v)}
&\ge\card[\big]{N(w_p)\setminus\bigl(N(x^+)\cup N(v)\bigr)}-1\notag\\
&\ge\card{W^+\cup\set{v,w_p^{++}}}-1\\
&=p+1,\notag
\end{align}
our final contradiction.
The \lcnamecref{thm:pathextension-ge3} follows.

\subsection{Proof of \texorpdfstring{\cref{thm:infinite}}{Theorem~10}}
\label{sec:proofs:infinite}

To prove that the conditions of \cref{thm:cycleextension} are sufficient
to find a Hamilton curve,
we will use the following theorem by Kündgen, Li, and Thomassen,
along with an observation.
\begin{oldtheorem}[Kündgen--Li--Thomassen~\cite{kundgen17}]
\label{thm:kundgen}
The following are equivalent for any locally finite graph~$G$.
\begin{enumerate}
\item For every finite vertex set~$S$, $G$~has a cycle containing~$S$.
\item $\fcomp{G}$ has a Hamilton curve.
\end{enumerate}
\end{oldtheorem}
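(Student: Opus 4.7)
The statement is an equivalence, so I would prove each direction separately, using that $\fcomp{G}$ is compact and metrizable for any locally finite $G$.

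For $(2) \Rightarrow (1)$: Let $\sigma\colon S^1 \to \fcomp{G}$ be a Hamilton curve and let $S=\{s_1,\dotsc,s_k\}$ be a finite vertex set, labeled in the cyclic order induced by $\sigma$. Each open subarc $\sigma_i$ of $\sigma$ between the unique parameters where $s_i$ and $s_{i+1}$ are attained is connected and misses $S$, so it lies in a single component of $\fcomp{G}\setminus S$; the vertex set $V_i := \sigma(\sigma_i)\cap V(G)$ lies in one component $K_i$ of $G-S$ to whose closure both $s_i$ and $s_{i+1}$ belong. By local finiteness and connectedness of $K_i$, a finite $s_i$–$s_{i+1}$ path $P_i$ in $G$ with internal vertices in $V_i$ can be extracted from the walk structure of $\sigma_i$; since the $V_i$'s are pairwise disjoint (by injectivity of $\sigma$ on vertices), the $P_i$'s are pairwise internally disjoint, and their concatenation is a cycle in $G$ through $S$.

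For $(1) \Rightarrow (2)$: Enumerate $V(G)=\{v_1,v_2,\dotsc\}$ and pick cycles $C_n$ containing $S_n:=\{v_1,\dotsc,v_n\}$. By a Ramsey-style diagonal argument on cyclic orders (there are only finitely many cyclic orders of $S_m$), pass to a subsequence along which the cyclic order of $S_m$ induced by $C_n$ stabilizes as $n\to\infty$ for each fixed $m$. Designate, once and for all, distinct parameters $t_i\in S^1$ respecting this limiting cyclic order, and reparametrize each $C_n$ as a continuous map $\sigma_n\colon S^1\to\fcomp{G}$ with $\sigma_n(t_i)=v_i$ for every $i\le n$, arranging further that each $v_i$ occupies a shrinking interval $I_i$ around $t_i$ independent of $n$. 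An Arzelà–Ascoli-type compactness argument in the metrizable space $\fcomp{G}$ yields a uniform limit $\sigma\colon S^1\to\fcomp{G}$, which is continuous with $\sigma(t_i)=v_i$ for every $i$. Injectivity at vertices follows from the disjointness of the $I_i$'s; surjectivity onto the ends follows from compactness, since every end has infinitely many vertices $v_i$ accumulating near it and hence lies in the closed image of $\sigma$. Thus $\sigma$ is a Hamilton curve.

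The main obstacle is the $(1)\Rightarrow(2)$ direction, specifically guaranteeing that the Arzelà–Ascoli limit is genuinely continuous and that no vertex is visited twice. I would address this by choosing the intervals $I_i$ so that $\sum|I_i|$ converges and $I_i\cap I_j=\emptyset$ for $i\neq j$, and by enforcing $\sigma_n(I_i)\subset U_i$ for every $n\ge i$, where $U_i$ is a neighborhood of $v_i$ in $\fcomp{G}$ whose diameter tends to $0$ as $i\to\infty$. This gives equicontinuity of the reparametrized $\sigma_n$'s at each $t_i$; outside $\bigcup I_i$, the $\sigma_n$'s parametrize edge traversals proportionally, so oscillations into ends do not destroy continuity in the limit. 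Injectivity at each $v_i$ is then automatic: $\sigma^{-1}(v_i)$ is confined to $I_i$, within which uniform convergence pins the limit value to $v_i$ only at the reserved parameter $t_i$. The freedom of a Hamilton curve to revisit ends (as opposed to a Hamilton circle) is exactly what absorbs the remaining limiting behavior of the $\sigma_n$'s between consecutive $t_i$'s.
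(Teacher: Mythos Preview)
The paper does not prove this statement: it is quoted verbatim as a theorem of K\"undgen, Li, and Thomassen \cite{kundgen17} and is used as a black box in the proof of \cref{thm:infinite}. There is therefore no ``paper's own proof'' to compare your attempt against.

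That said, a brief comment on your sketch. The overall architecture---extract finite cycles from a Hamilton curve for $(2)\Rightarrow(1)$, and take a diagonal limit of cycles for $(1)\Rightarrow(2)$---is indeed the shape of the argument in \cite{kundgen17}. But several steps in your write-up are not yet proofs. In $(2)\Rightarrow(1)$, the open arc $\sigma_i$ between $s_i$ and $s_{i+1}$ may traverse infinitely many edges and pass through ends; you need to argue carefully why a \emph{finite} $s_i$--$s_{i+1}$ path with internal vertices in the corresponding component of $G-S$ can be extracted, and ``the walk structure of $\sigma_i$'' is not a definition. In $(1)\Rightarrow(2)$, the Arzel\`a--Ascoli step is the crux, and your equicontinuity argument is incomplete: on the complement of $\bigcup I_i$ the maps $\sigma_n$ are not obviously equicontinuous just because edges are parametrized proportionally---between two fixed $t_i$'s the cycle $C_n$ may contain arbitrarily many edges as $n$ grows, so a short parameter interval can be mapped across many edges. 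The original proof handles convergence differently (working with the induced cyclic orders and the topology of $\fcomp{G}$ directly rather than via Arzel\`a--Ascoli), and you would need substantially more detail to make your route rigorous.
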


\begin{observation}
\label{obs:infinite}
In the proof of \cref{thm:cycleextension},
whenever we reach a contradiction by constructing a cycle~$C_{n+\ell}$,
the new cycle contains either the vertex~$v$ or a~vertex~$u$ at distance at most~$3$ from~$v$
(see \cref{rem:extension}).
Thus, if $G$ satisfies the conditions of \cref{thm:cycleextension}
and $v$~is a vertex adjacent to a cycle~$C_n$ in~$G$,
then there is a cycle $C_{n+\ell}$ containing all vertices of $C_n$
and at least one additional vertex from the set~$M_3(v)$,
unless $G\in\mathcal{K}$.
\end{observation}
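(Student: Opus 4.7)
The plan is to prove \cref{obs:infinite} by a direct audit of the cycle constructions in the proof of \cref{thm:cycleextension}; the second assertion then follows as an immediate corollary of the first together with \cref{thm:cycleextension}.

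First I would enumerate every cycle $C_{n+\ell}$ explicitly built to reach a contradiction in that proof. Inspection shows there are exactly six: three in \cref{thm:unionge2cycleextension:claim:N(wi)}, namely $w_ivw_i^+\dir C_nw_i$, $w_ivw_j\revdir C_nw_i^+w_j^+\dir C_nw_i$, and $w_ivuw_i^+\dir C_nw_i$; two in \cref{thm:unionge2cycleextension:claim:everysecond}, namely $w_2^-w_2^+w_2vw_3\dir C_nw_2^-$ and $w_1vw_jw_1^+\dir C_nw_j^-w_j^+\dir C_nw_1$; and one in \cref{thm:unionge2cycleextension:claim:GinK}, namely $w_1vw_2w_1^+uw_2^+\dir C_nw_1$.

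For each of the six I would then verify that every vertex of the cycle lying outside $V(C_n)$ belongs to $M_3(v)$. In four of the cases the only new vertex is $v$, which is in $M_3(v)$ trivially. In $w_ivuw_i^+\dir C_nw_i$ there is additionally a vertex $u\in N(v)\subseteq M_3(v)$. The only genuinely nontrivial case is $w_1vw_2w_1^+uw_2^+\dir C_nw_1$, where the extra vertex $u$ is only known to be adjacent to some $w_i^+\in W^+$; here I would use that $w_i^+\in N(w_i)$ and $w_i\in N(v)$ to conclude that $u,w_i^+,w_i,v$ is a walk of length $3$, so $u\in M_3(v)$. This establishes the first assertion.

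For the second assertion, take $v$ adjacent to $C_n$ with $G\notin\mathcal K$ and rerun the argument of \cref{thm:cycleextension} starting from this specific $v$. Since $G\notin\mathcal K$, the chain of deductions cannot reach the conclusion of \cref{thm:unionge2cycleextension:claim:GinK}, so at some earlier step one of the six cycles above must actually be constructed; the first assertion then supplies the required new vertex from $M_3(v)$. The main obstacle is therefore just the distance bookkeeping in the case of \cref{thm:unionge2cycleextension:claim:GinK}; everything else is immediate from inspection.
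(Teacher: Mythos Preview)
Your approach---auditing each cycle constructed in the proof of \cref{thm:cycleextension}---is exactly the paper's own argument, and your distance computation $u,w_i^+,w_i,v$ showing $u\in M_3(v)$ is the key point. However, your enumeration is incomplete, and this causes you to misplace where the nontriviality actually lies.

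The explicit cycle $w_1vw_2w_1^+uw_2^+\dir C_nw_1$ in \cref{thm:unionge2cycleextension:claim:GinK} already contains $v$, so that case is trivial for the observation. What you have missed is that before this cycle can be written down, the proof of \cref{thm:unionge2cycleextension:claim:GinK} asserts that $u$ ``is adjacent to every second vertex of $C_n$ as well''---and this is obtained by rerunning \cref{thm:unionge2cycleextension:claim:N(wi),thm:unionge2cycleextension:claim:everysecond} with $u$ in the role of the outside vertex. If those claims fail for $u$, the proof halts there and constructs a cycle through $u$ that need \emph{not} contain $v$. These implicit cycles are the genuinely nontrivial ones, and they are precisely where your walk $u,w_i^+,w_i,v$ is needed (here $u$ is only known to be a neighbour of some $w_i^+$, which matches the phrasing you used). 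Once you add this case to your enumeration, your argument goes through; but as written, the claim that there are ``exactly six'' cycles and that ``one of the six cycles above must actually be constructed'' is not quite right.
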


Using \cref{obs:infinite,thm:kundgen} it is straightforward to prove \cref{thm:infinite}.
First note that $G\notin\mathcal{K}$ since is infinite.
Now for any finite vertex set~$S$,
pick a vertex~$a$ and an integer~$r$ such that $S\subseteq M_r(a)$,
and let $C_n$ be a cycle through $a$ containing as many vertices as possible from the set~$M_{r+3}(a)$.
If $C_n$ does not contain all vertices of $S$,
there is a vertex $v\in M_r(a)\setminus V(C_n)$ with a neighbor on~$C_n$,
and by using \cref{obs:infinite} we can find a cycle~$C_{n+\ell}$ containing more vertices of $M_{r+3}(a)$,
a contradiction.
Thus $C_n$ contains all vertices of~$S$.
Now, using \cref{thm:kundgen} we can conclude that $\fcomp{G}$ has a Hamilton curve.

\subsection{Proof of \texorpdfstring{\cref{thm:bipartite}}{Theorem~12}}
\label{sec:proofs:bipartite}

For a non-regular graph~$G$ with maximum degree at least three it is straight\-forward
to use \cref{lem:bipartite:smalldiff} below to prove that
$G$ is a subgraph of a complete bipartite graph~$K_{n,n}$ with a single vertex or a single edge removed,
by simply constructing the possible graphs vertex by vertex.
Similarly one can prove, using \cref{lem:bipartite:regulard=2}, that
every regular, connected, bipartite $L_1$-graph with maximum degree at least three
is either a complete bipartite graph $K_{n,n}$ or a subgraph of~$K_{n,n}$ with a perfect matching removed.
\Cref{thm:bipartite} follows.
For details, see~\cite{granholm16}.

\begin{observation}[\cite{granholm16}]
\label{lem:bipartite:smalldiff}
Let $G$ be a bipartite $L_1$-graph
and let $u$ and $v$ be two adjacent vertices in~$G$.
Then $\abs{d(u)-d(v)}\le1$.
\end{observation}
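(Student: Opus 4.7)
The plan is to apply the $L_1$-condition to a carefully chosen triple and exploit bipartiteness to simplify the neighborhood union. Assume $d(u)\ge d(v)$; we want to show $d(u)\le d(v)+1$. The case $d(u)=1$ is immediate since $d(v)\ge1$, so we may assume $d(u)\ge 2$ and pick any $u'\in N(u)\setminus\set{v}$. Because $G$ is bipartite, $u'$ and $v$ both lie in the part opposite to $u$, so $u'v\notin E(G)$ and $u$ is a common neighbor, giving $d(u',v)=2$.

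Applying the $L_1$-condition to the triple $(u',v,u)$ yields
\[d(u')+d(v)\ge\card{N(u')\cup N(v)\cup N(u)}-1.\]
The crucial simplification is that in a bipartite graph $N(u)$ lies entirely in the part containing $u'$ and $v$, while $N(u')$ and $N(v)$ lie in the part containing $u$. Thus $N(u)$ is disjoint from both $N(u')$ and $N(v)$, and the union splits as
\[\card{N(u')\cup N(v)\cup N(u)}=d(u)+d(u')+d(v)-\card{N(u')\cap N(v)}.\]
Plugging this in and cancelling gives $\card{N(u')\cap N(v)}\ge d(u)-1$, and since $\card{N(u')\cap N(v)}\le d(v)$, we conclude $d(u)\le d(v)+1$.

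By symmetry (swapping the roles of $u$ and $v$, handling the degree-one case the same way), we also obtain $d(v)\le d(u)+1$, whence $\card{d(u)-d(v)}\le 1$.

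There is no real obstacle here: the only thing to get right is the choice of triple — applying $L_1$ to $(u',v,u)$ rather than trying to invoke it directly for the adjacent pair $u,v$ (which would not fit the distance-$2$ hypothesis). Bipartiteness then does all the work by forcing the three neighborhoods to be disjoint in the two pairings that matter, turning the union bound into a clean inequality between $\card{N(u')\cap N(v)}$ and $d(u)$.
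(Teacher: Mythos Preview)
Your argument is correct. The paper itself does not include a proof of this observation: it is merely stated with a citation to the author's thesis~\cite{granholm16}. That said, your approach is exactly the natural one and is essentially the bipartite specialization of the paper's \cref{prop:unionreformulation}: applying the $L_1$-condition to $(u',v;u)$ and using that $N(u)$ is disjoint from $N(u')\cup N(v)$ collapses the inequality to $\card{N(u')\cap N(v)}\ge d(u)-1$, and bounding the left side by $d(v)$ finishes it. The only minor redundancy is the closing ``by symmetry'' paragraph: since you already assumed $d(u)\ge d(v)$ without loss of generality, the inequality $d(u)\le d(v)+1$ alone gives $\abs{d(u)-d(v)}\le1$.
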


\begin{observation}[\cite{granholm16}]
\label{lem:bipartite:regulard=2}
Let $G$ be an $n$-regular bipartite $L_1$-graph
and let $u$ and~$v$ be two vertices at distance~$2$ in~$G$.
Then $\card{N(u)\cap N(v)}\ge n-1$.
\end{observation}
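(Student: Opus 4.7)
The plan is to apply the $L_1$ condition directly to a suitable triple and exploit the bipartite structure to control the union $N(u)\cup N(v)\cup N(w)$.

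First I would pick a common neighbor $w\in N(u)\cap N(v)$, which exists because $d(u,v)=2$. The $L_1$ inequality then gives $d(u)+d(v)\ge\card{N(u)\cup N(v)\cup N(w)}-1$, and $n$-regularity reduces the left side to $2n$, so $\card{N(u)\cup N(v)\cup N(w)}\le 2n+1$.

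Next I would use bipartiteness. Since $u$ and $v$ are at even distance they lie in the same partition class $A$, while $w$ lies in the other class $B$. Therefore $N(u)\cup N(v)\subseteq B$ and $N(w)\subseteq A$, and these two sets are disjoint. Hence $\card{N(u)\cup N(v)\cup N(w)}=\card{N(u)\cup N(v)}+\card{N(w)}=\card{N(u)\cup N(v)}+n$, which combined with the previous bound yields $\card{N(u)\cup N(v)}\le n+1$.

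Finally I would apply inclusion–exclusion: $\card{N(u)\cup N(v)}=d(u)+d(v)-\card{N(u)\cap N(v)}=2n-\card{N(u)\cap N(v)}$. Substituting into the bound above gives $\card{N(u)\cap N(v)}\ge n-1$, as required. There is no real obstacle here; the only thing to be careful about is justifying that $N(u)\cup N(v)$ and $N(w)$ are disjoint, which is immediate from bipartiteness once one notes that $u,v$ share a partition class.
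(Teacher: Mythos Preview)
Your proof is correct and is essentially the intended argument. The paper does not spell out a proof of this observation (it defers to~\cite{granholm16}), but in the paper's own language your computation is exactly a direct application of \cref{prop:unionreformulation}: bipartiteness gives $N(w)\setminus\bigl(N(u)\cup N(v)\bigr)=N(w)$, so the reformulated $L_1$-inequality yields $\card{N(u)\cap N(v)}\ge\card{N(w)}-1=n-1$ in one line.
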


\section*{Acknowledgement}
I would like to thank Armen Asratian and Carl Johan Casselgren
for many helpful comments and fruitful discussions while preparing this work.

\end{document}